\newcommand{\R}{\mathbb{R}}
\newcommand{\D}{\mathbb{D}}
\newcommand{\set}[1]{\left\lbrace #1 \right\rbrace}
\newcommand{\abs}[1]{\left\lvert #1 \right\rvert}
\newcommand{\norm}[1]{\left\lVert #1 \right\rVert}
\newcommand{\MM}{\mathcal{M}}
\newtheorem{theorem}{Theorem}[section]
\newtheorem*{theorem*}{Theorem}
\newtheorem{proposition}[theorem]{Proposition}
\newtheorem{lemma}[theorem]{Lemma}
\theoremstyle{definition}
\newtheorem{definition}[theorem]{Definition}
\newtheorem{remark}[theorem]{Remark}
\newtheorem*{question*}{Question}
\DeclareMathOperator{\Int}{Int}
\DeclareMathOperator{\im}{Im}
\title{Rhombs inscribed in spheres bounding strictly-convex regions}
\author{Jacob Saunders}
\begin{document}

\maketitle

\section{Introduction}

Over a century old, the square peg problem asks if every Jordan curve (closed plane curve with no self-intersections) contains the vertices of a square.
As of the time of writing, the problem remains unsolved.
A survey of the progress made thus far on the square peg problem can be found in \cite{matschke}.
More generally, if some subset of $\R^n$ contains the vertices of some polytope, then the set is said to \textit{inscribe} the polytope.
A Jordan curve is the image of an embedding of the circle $S^1$ into $\R^2$, so it is natural to ask analogous questions for higher dimensional shapes inscribed in the image of an embedding of the $n$-sphere $S^n$ into $\R^{n + 1}$.
The square peg problem has two natural $n$-dimensional generalisations.
The first (and perhaps most obvious) asks if the image of each embedding of $S^n$ into $\R^{n + 1}$ inscribes an $(n + 1)$-cube, but this admits a counterexample in the form of a very flat tetrahedron in $\R^3$ as noted in \cite{kakutani}.
The second problem is similar, but focuses on inscribed regular crosspolytopes rather than cubes.
Some familiar $n$-crosspolytopes are the square ($n = 2$), and the octahedron ($n = 3$).

\begin{definition}
    Let $p \in \R^n$, $\set{v_1, \dots, v_n}$ be a set of orthonormal vectors and let $\lambda_1, \dots, \lambda_n \in \R_{ > 0}$.
    An $n$-\textit{rhomb} is the convex hull of the points $p \pm \lambda_i v_i$ (which are the vertices of the rhomb).
    If $\lambda_1 = \cdots = \lambda_n$ (the $n$-rhomb is regular) then we refer to the $n$-rhomb as an $n$-\textit{crosspolytope}.
\end{definition}

We can therefore associate to each $n$-rhomb an orthonormal basis $\set{v_1, \dots, v_n}$ of $\R^n$, and we call this the \textit{direction} of the $n$-rhomb.
Understanding how $(n + 1)$-rhombs are inscribed in the image of an embedding of an $n$-sphere into $\R^{n + 1}$ may offer some insight into the regular case.

\begin{figure}[h]
    \centering
    \includegraphics[width=\textwidth]{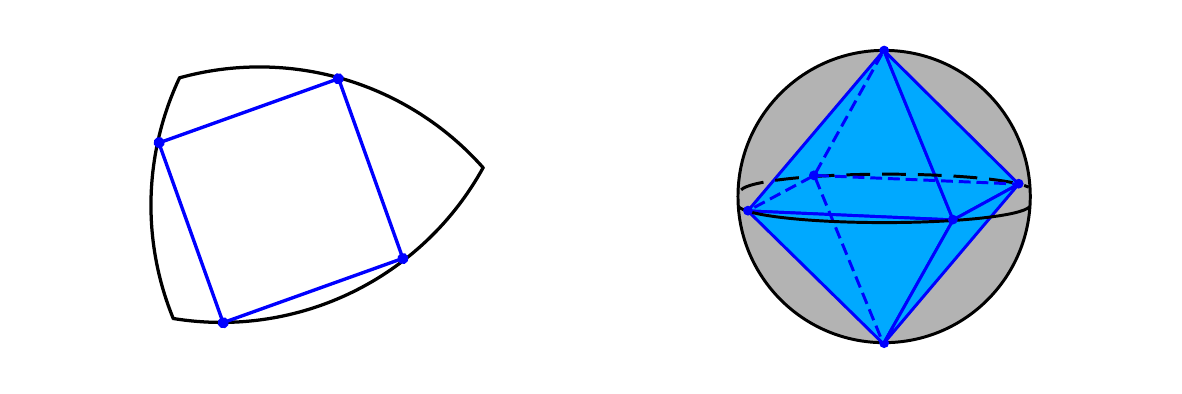}
    \caption{A square inscribed in a Jordan curve and a crosspolytope inscribed in the 2-sphere.}
\end{figure}

One of the earliest results relating to the square peg problem is due to Emch \cite{emch}, and uses the fact that every smooth Jordan curve that bounds a strictly-convex region inscribes a unique rhombus in every direction.
The lengths of the diagonals of the rhombus vary continuously when this direction is rotated, meaning one can use an intermediate value theorem argument to show that every such curve inscribes a square.
In a paper of Pucci \cite{pucci} it is claimed that every $n$-sphere bounding a convex region in $\R^{n + 1}$ inscribes an $(n + 1)$-rhomb in every direction.
A later paper of Hadwiger, Larman and Mani \cite{hyperrhombs} gives a proof of the claim for smoothly embedded $n$-spheres bounding strictly-convex regions, but provides a counterexample for $n \geq 2$ which is (non-strictly) convex, but not smooth.
\\~\\
A recent paper of Fung \cite{fung} proves that every Jordan curve inscribes uncountably many rhombi, and an integral result in the paper is that every Jordan curve with no \textit{special corners} in a given direction inscribes a rhombus in this direction.
We introduce an $n$-dimensional generalisation of the notion of a special corner in \autoref{defn-special-corner}.
Fung's technique equates to splitting a Jordan curve into a pair of paths and constructing medians i.e.\ the sets of midpoints of these paths.
It is shown that these medians intersect, corresponding to an inscribed rhombus in the desired direction.
\\~\\
The aim of this paper is to give a condition weaker than smoothness which guarantees that the image of a strictly-convex embedding of an $n$-sphere into $\R^{n + 1}$ inscribes an $(n + 1)$-rhomb in a given direction.
The condition comes in two parts: a lack of special corners with respect to a basis and another part called \textit{regularity with respect to a basis}, which we give the definition for in \autoref{defn-regularity}.
We will prove

\begin{theorem*}\label{main-theorem}
    Let $B$ be the image of a strictly-convex embedding of $\D^{n + 1}$ into $\R^{n + 1}$ with boundary $n$-sphere $S$.
    Let $\set{v_1, \dots, v_{n + 1}}$ be an orthonormal basis of $\R^{n + 1}$ such that
    \begin{itemize}
        \item $B$ is regular with respect to this basis, and
        \item $B$ has no special corners with respect to this basis.
    \end{itemize}
    Then $S$ inscribes an $(n + 1)$-rhomb with direction $\set{v_1, \dots, v_{n + 1}}$.
\end{theorem*}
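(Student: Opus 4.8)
The plan is to reformulate the existence of an inscribed $(n+1)$-rhomb as the existence of a zero of a displacement vector field, and then to produce such a zero by a degree-theoretic argument driven by strict convexity. A rhomb with centre $p$ and direction $\set{v_1,\dots,v_{n+1}}$ has vertices $p\pm\lambda_iv_i$, so it is inscribed in $S$ precisely when $p$ is the common midpoint of $n+1$ chords of $B$, one in each direction $v_i$. For $p\in\Int(B)$ and each $i$, strict convexity guarantees that the line $p+\R v_i$ meets $S$ in exactly two points $p+t_i^{+}(p)v_i$ and $p-t_i^{-}(p)v_i$ with $t_i^{\pm}(p)>0$; set
\[
 g_i(p)=t_i^{+}(p)-t_i^{-}(p),\qquad V(p)=\tfrac12\sum_{i=1}^{n+1}g_i(p)\,v_i .
\]
Then $p$ is the midpoint of its $v_i$-chord iff $g_i(p)=0$, the zero set $\set{g_i=0}$ is exactly the $i$-th median hypersurface $M_i$, and $V(p)=0$ iff $p\in\bigcap_i M_i$, i.e.\ iff $p$ is the centre of an inscribed rhomb (with $\lambda_i=t_i^{\pm}(p)>0$, so the rhomb is automatically nondegenerate once $p$ is interior). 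Thus it suffices to find a zero of $V$ in $\Int(B)$.

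Next I would record the boundary behaviour of $V$. Since $\abs{g_i}$ is bounded by the diameter of $B$, the field $V$ is bounded, and the regularity hypothesis ensures each $t_i^{\pm}$ is continuous on $\Int(B)$. As $p\to s\in S$ along a line transverse to $S$ at $s$ (equivalently, $s$ not on the $v_i$-silhouette), one endpoint of the $v_i$-chord tends to $s$ while the other stays a definite distance away, so $g_i(p)\to-\varepsilon_i(s)L_i(s)$ where $L_i(s)>0$ and $\varepsilon_i(s)=\operatorname{sign}\langle v_i,\nu\rangle$ for a supporting outward normal $\nu$ at $s$; for $s$ on the $v_i$-silhouette the chord shrinks to a point and $g_i\to0$. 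Consequently $\langle V(p),\nu\rangle\to-\tfrac12\sum_i\abs{\langle v_i,\nu\rangle}L_i(s)<0$, because $\nu\neq0$ forces $\langle v_i,\nu\rangle\neq0$ for at least one $i$. In words, $V$ points strictly inward everywhere on $\partial B$. This is the step that consumes both hypotheses: the no-special-corners condition is exactly what rules out the boundary points at which this inward-pointing behaviour (or the continuity of $V$ up to $\partial B$) would break down.

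Finally I would conclude by degree theory. Fix $c\in\Int(B)$ and consider $W(p)=c-p$, which also points strictly inward on $\partial B$. The straight-line homotopy $(1-\tau)V+\tau W$ satisfies $\langle(1-\tau)V+\tau W,\nu\rangle<0$ on $\partial B$, so it never vanishes there; hence $\deg(V,\Int(B),0)=\deg(W,\Int(B),0)=\operatorname{sign}\det(-I)=(-1)^{n+1}\neq0$, and $V$ has a zero in $\Int(B)$, giving the desired rhomb. (To avoid discussing continuity of $V$ on all of $\partial B$ at once, one may instead run this argument on the sublevel bodies $B_\varepsilon=\set{p:\operatorname{dist}(p,S)\ge\varepsilon}$, on which $V$ is continuous and, for small $\varepsilon$, still inward-pointing.) The main obstacle is the second step: verifying the continuity of the $t_i^{\pm}$ and the strict inward-pointing estimate at the non-smooth boundary points of a merely strictly-convex body, which is precisely where regularity and the absence of special corners must be invoked.
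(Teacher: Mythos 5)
Your route is genuinely different from the paper's. The paper never works with a vector field: it constructs the medians $\MM_i$ as embedded discs (Proposition 1.1), proves that the silhouette spheres $D_i^0$ intersect in the pattern of the $(n-1)$-skeleton of a crosspolytope (Proposition 1.2, which is where regularity is consumed), transports this structure to a homeomorphism from the $(n+1)$-cube to $B$, and applies the Poincar\'e--Miranda theorem to signed distance functions of the $\MM_i$. You instead encode the midpoint conditions in the chord-imbalance field $V$ and run a Brouwer-degree argument with an inward-pointing boundary condition. This is an attractive trade: it bypasses the crosspolytope combinatorics entirely, and --- notably --- it never uses the regularity hypothesis at all (continuity of $t_i^{\pm}$ on $\Int(B)$ follows from convexity alone, since the upper endpoint function is concave and the lower one convex over each projection). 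If completed, your argument would prove a strictly stronger theorem than the paper's, which is worth knowing; regularity is needed only for the paper's particular route through the crosspolytope condition.

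However, two steps are genuinely gapped as written. First, your justification of strict inward-pointing is incorrect: from $\langle V(p),\nu\rangle\to-\tfrac12\sum_i\abs{\langle v_i,\nu\rangle}\,L_i(s)$ you conclude negativity ``because $\nu\neq0$ forces $\langle v_i,\nu\rangle\neq0$ for at least one $i$,'' but that index $i$ may have $L_i(s)=0$, i.e.\ $s\in D_i^0$ --- and at a non-smooth boundary point this happens precisely when $\langle v_i,\nu\rangle\neq 0$ is still possible, so its term contributes nothing. The correct pairing runs the other way: the no-special-corners hypothesis gives some $i$ with $L_i(s)>0$, and then strict convexity forces $\langle v_i,\nu\rangle\neq0$ \emph{for that same} $i$, because the opposite chord endpoint $s\mp L_i(s)v_i$ lies in $B$ and a supporting hyperplane of a strictly convex body meets $B$ only at its contact point (this is essentially the argument in the paper's proof of Proposition 1.2, second bullet). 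Second, you defer the continuity of $V$ up to $S$, and your $B_{\varepsilon}$ fallback silently requires a uniform inward-pointing estimate on $\partial B_{\varepsilon}$ that you never establish. This gap is fillable, and more cleanly than by exhaustion: the functions $t_i^{\pm}$ extend continuously to all of $B$. On $\Int(B)$ this is concavity/convexity of the chord-endpoint functions; at a silhouette point one uses that the upper endpoint function is upper semicontinuous, the lower one lower semicontinuous, and that they coincide over $\partial p_i(B)$ because points there have unique preimages in $B$ (the paper's Lemma 2.4, a consequence of strict convexity), so the two functions pinch. With $V$ continuous on $B$ and the corrected boundary estimate, your homotopy to $p\mapsto c-p$ runs directly on $B$ and the proof closes.
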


Our arguments are presented in terms of the orthogonal projection of a subset of $\R^n$ to a hyperplane with normal $v \in \R^{n}$ (denoted $p_v$).
In \hyperref[section-2]{Section 2}, we show via \autoref{preimage-of-boundary-is-sphere} that a point $y \in p_v(B)$ is in the boundary of $p_v(B)$ exactly when there is only one point $x \in S$ with $p_v(x) = y$.
As a consequence, the set $D_v^0$ of points in $B$ that project to the boundary of $p_v(B)$ is homeomorphic to $S^{n - 1}$.
If a point does not project to the boundary then we deduce via \autoref{interior-pair-of-points} that there are exactly two such points $x^+, x^- \in S$ with $p_v(x^+) = p_v(x^-) = y$.
Using \autoref{top-and-bottom-open}, we show that we are able to split the $n$-sphere $S$ bounding $B$ into a pair of open sets $U_v^+$ and $U_v^-$ representing upper and lower sections of the sphere.
The section ends with a proof of the following, where $D_v^{\pm}$ denotes $D_v^0 \cup U_v^{\pm}$.

\begin{proposition}\label{median-is-disc}
    The set
    \[ \MM_{v} = \set{\frac{x^+ + x^-}{2} : x^+ \in D_v^+, x^- \in D_v^-, p_v(x^+) = p_v(x^-)} \]
    is homeomorphic to $\D^{n}$, and $\MM_v \cap S$ is homeomorphic to $S^{n - 1}$.
\end{proposition}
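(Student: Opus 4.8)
The plan is to parametrise $\MM_v$ by the projection $p_v(B)$ and to show that this parametrisation is a homeomorphism. Since $B$ is a compact convex body with nonempty interior in $\R^{n+1}$, its orthogonal projection $p_v(B)$ is a compact convex subset of the hyperplane $v^{\perp}$ with nonempty interior, and is therefore itself homeomorphic to $\D^n$. It thus suffices to produce a homeomorphism $\phi \colon p_v(B) \to \MM_v$. By \autoref{preimage-of-boundary-is-sphere} and \autoref{interior-pair-of-points}, every $y \in p_v(B)$ determines its fibre in $S$ completely: if $y \in \partial p_v(B)$ there is a single point $x(y) \in D_v^0$ with $p_v(x(y)) = y$, and if $y \in \Int p_v(B)$ there are exactly two points, one of which lies in $U_v^+$ (call it $x^+(y)$) and one in $U_v^-$ (call it $x^-(y)$) by \autoref{top-and-bottom-open}. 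Writing $x^{\pm}(y) = x(y)$ when $y \in \partial p_v(B)$, I define $\phi(y) = \tfrac{1}{2}\bigl(x^+(y) + x^-(y)\bigr)$.

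Several properties are immediate. Applying $p_v$ and using linearity gives $p_v(\phi(y)) = \tfrac12\bigl(p_v(x^+(y)) + p_v(x^-(y))\bigr) = y$, so $\phi$ admits a left inverse and is injective. For surjectivity onto $\MM_v$, take any element $\tfrac12(x^+ + x^-)$ with $x^+ \in D_v^+$, $x^- \in D_v^-$ and $y := p_v(x^+) = p_v(x^-)$: if $y$ is a boundary point then uniqueness of the fibre forces $x^+ = x^- = x(y)$, while if $y$ is interior then $x^+ \in D_v^+$ cannot lie in $D_v^0$ (which projects into $\partial p_v(B)$), so $x^+ \in U_v^+$ and hence $x^+ = x^+(y)$, and likewise $x^- = x^-(y)$. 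In either case the element equals $\phi(y)$, so $\MM_v = \im \phi$ and $\phi$ is a well-defined bijection.

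The crux is the continuity of $\phi$, which I treat separately on the interior and on the boundary of $p_v(B)$. On $\Int p_v(B)$, after choosing coordinates with $v$ as the last axis, the sets $U_v^+$ and $U_v^-$ are precisely the graphs of the upper and lower boundary functions $f^+$ and $f^-$ of $B$ over $\Int p_v(B)$; these are respectively concave and convex, hence continuous on this open set, so $x^+$, $x^-$ and therefore $\phi$ are continuous there. Continuity at a boundary point $y_0 \in \partial p_v(B)$ follows from compactness and uniqueness: for any sequence $y_k \to y_0$, every subsequential limit of $x^+(y_k)$ (respectively $x^-(y_k)$) lies in the compact set $S$ and projects to $y_0$, hence equals the unique fibre point $x(y_0)$; thus $x^{\pm}(y_k) \to x(y_0)$ and $\phi(y_k) \to x(y_0) = \phi(y_0)$. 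Being a continuous bijection from the compact space $p_v(B)$ to the Hausdorff space $\MM_v \subseteq \R^{n+1}$, $\phi$ is a homeomorphism, and so $\MM_v$ is homeomorphic to $\D^n$.

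Finally I identify $\MM_v \cap S$. For interior $y$ the points $x^+(y) \neq x^-(y)$ are distinct points of $S$, so by strict convexity of $B$ their midpoint $\phi(y)$ lies in the interior of $B$ and hence not on $S$; for boundary $y$ we have $\phi(y) = x(y) \in D_v^0 \subseteq S$. Therefore $\MM_v \cap S = \phi\bigl(\partial p_v(B)\bigr)$. Since $\phi$ is a homeomorphism and $\partial p_v(B)$ is homeomorphic to $S^{n-1}$ (being the boundary sphere of $p_v(B) \cong \D^n$), the restriction of $\phi$ gives $\MM_v \cap S \cong S^{n-1}$, completing the proof. The main obstacle is the continuity of $\phi$ along $\partial p_v(B)$, where the two sheets $U_v^+$ and $U_v^-$ collapse onto $D_v^0$; the uniqueness of preimages over boundary points established in \autoref{preimage-of-boundary-is-sphere} is exactly what makes the compactness argument go through.
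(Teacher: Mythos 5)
Your proof is correct, but it takes a genuinely different route from the paper's. You construct the section $\phi \colon p_v(B) \to \MM_v$, $\phi(y) = \tfrac12(x^+(y) + x^-(y))$, and do the hard work of proving $\phi$ continuous directly: on the interior via the fact that the upper and lower boundary functions of a convex body are concave/convex (hence continuous, an external fact from convex analysis), and at boundary points via a sequential-compactness-plus-uniqueness argument. The paper instead works in the opposite direction: it defines $f(x^+, x^-) = p_v(x^+) - p_v(x^-)$ and $g(x^+, x^-) = \tfrac12(x^+ + x^-)$ on $D_v^+ \times D_v^-$, observes $\MM_v = g(f^{-1}(0))$ is compact, and then applies the compact-to-Hausdorff argument to the restriction of $p_v$ to $\MM_v$ --- whose continuity is free --- so that no continuity of the section maps $x^\pm$ ever needs to be established. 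The two approaches produce mutually inverse homeomorphisms, but the burden of proof is placed differently: the paper's trick buys a shorter argument with no appeal to convex-function continuity, while yours is more explicit about the geometry of the two sheets collapsing onto $D_v^0$. Note also that you could have shortened your continuity argument considerably by citing the observation, made in the paper just before $\MM_v$ is defined, that $p_v \colon D_v^\pm \to p_v(B)$ is already a homeomorphism (again by the compact-to-Hausdorff argument); its inverse is exactly your map $y \mapsto x^\pm(y)$, so $\phi$ is continuous as an average of two continuous maps, with no case split needed.
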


An orthonormal basis $\set{v_1, \dots, v_{n + 1}}$ of $\R^{n + 1}$ provides us with a family of spheres $D_i^0$ (where $D_i^0$ refers to $D_{v_i}^0$).
\hyperref[section-3]{Section 3} starts with a proof of the following result regarding how the spheres are arranged on $S$, as long as $B$ is regular with respect to the basis, and has no special corners with respect to the basis.

\begin{proposition}\label{intersections-of-spheres}
    The spheres $D_i^0$ are arranged on $S$ such that
    \begin{itemize}
        \item if $I$ is a subset of $\set{1, \dots, n + 1}$ of size $k \leq n$, then the set $\bigcap_{i \in I} D_i^0$ is homeomorphic to $S^{n - k}$,
        \item for each $j$, $\bigcap_{i \neq j} D_i^0$ is a pair of points $x^{\pm}$ such that $x^{\pm} \in D_j^{\pm}$,
        \item $\bigcap_{i} D_i^0$ is empty.
    \end{itemize}
\end{proposition}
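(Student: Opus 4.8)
The plan is to prove the three claims by induction on the size $k$ of the index set $I$, reducing everything to the structure theory of a single projection developed in Section 2. The key observation is that each $D_i^0 \subseteq S$ is, by the discussion preceding \autoref{median-is-disc}, the set of points of $S$ that project via $p_{v_i}$ to the boundary of $p_{v_i}(B)$, and it is homeomorphic to $S^{n-1}$. I would first set up the geometric picture: for a point $x \in S$ to lie in $D_i^0$ means $x$ is the unique preimage under $p_{v_i}$ of its image, which by \autoref{preimage-of-boundary-is-sphere} and strict convexity means the tangent supporting hyperplane of $B$ at $x$ contains the direction $v_i$. Equivalently, writing $\nu(x)$ for the outward unit normal to $S$ at $x$ (which exists by smoothness), the condition $x \in D_i^0$ is exactly the orthogonality condition $\langle \nu(x), v_i \rangle = 0$. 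This reformulation in terms of the Gauss map is what I expect to make the intersection statements tractable.

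Granting this normal-vector description, the intersection $\bigcap_{i \in I} D_i^0$ is the set of $x \in S$ whose normal $\nu(x)$ is orthogonal to every $v_i$ with $i \in I$, i.e.\ whose normal lies in the $(n+1-k)$-dimensional subspace $W_I = \operatorname{span}\set{v_i : i \notin I}$. Since $B$ is strictly convex and smooth, the Gauss map $\nu \colon S \to S^n$ is a homeomorphism (this is where regularity with respect to the basis and the absence of special corners should feed in, guaranteeing that the relevant level sets are genuinely transverse submanifolds and avoiding the degenerate corner behaviour that breaks the non-strictly-convex counterexample of \cite{hyperrhombs}). Under this homeomorphism, $\bigcap_{i \in I} D_i^0$ corresponds to $\nu^{-1}(S^n \cap W_I)$, and $S^n \cap W_I$ is precisely the unit sphere of the $(n+1-k)$-dimensional space $W_I$, namely a copy of $S^{n-k}$. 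This immediately gives the first bullet for all $k \le n$, and for $k = n$ it gives $S^{n-n} = S^0$, a pair of points, which is the content of the second bullet.

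For the second bullet I must additionally identify which of the two points lies in $D_j^+$ and which in $D_j^-$. Here I would argue that when $I = \set{1,\dots,n+1}\setminus\set{j}$, the surviving subspace $W_I$ is the line $\R v_j$, so the two points of $\bigcap_{i \neq j} D_i^0$ are $\nu^{-1}(v_j)$ and $\nu^{-1}(-v_j)$: the two points of $S$ whose outward normals point in the $\pm v_j$ directions. These are exactly the top and bottom points of $S$ in the $v_j$-direction, and by the construction of $U_{v_j}^{\pm}$ and $D_{v_j}^{\pm} = D_{v_j}^0 \cup U_{v_j}^{\pm}$ in \autoref{top-and-bottom-open}, the point with normal $+v_j$ lies in the closure of the upper section $D_j^+$ and the one with normal $-v_j$ in $D_j^-$, giving the desired sign assignment. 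Finally, the third bullet follows by taking $I = \set{1,\dots,n+1}$: then $W_I = \set{0}$, so $S^n \cap W_I = \emptyset$, hence $\bigcap_i D_i^0 = \nu^{-1}(\emptyset) = \emptyset$. I expect the main obstacle to be the rigorous justification that the Gauss map is a homeomorphism and that the level sets intersect transversely, since strict convexity alone guarantees injectivity of $\nu$ but the smoothness/regularity hypotheses must be invoked carefully to ensure $S^n \cap W_I$ pulls back to a manifold of the claimed dimension rather than something with lower-dimensional degeneracies; this is precisely the step where the "no special corners" hypothesis is doing the essential work.
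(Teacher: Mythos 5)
Your proposal has a genuine gap: the entire argument rests on the Gauss map $\nu \colon S \to S^n$, and you yourself flag that the outward unit normal ``exists by smoothness'' --- but smoothness is not a hypothesis of the proposition or anywhere in the paper. The whole point of the paper is to replace smoothness by the strictly weaker pair of conditions in \autoref{defn-regularity} and \autoref{defn-special-corner}: the remarks following those definitions show that every smooth strictly-convex $B$ is regular with respect to every basis and has no special corners, but not conversely. At a non-smooth boundary point there is no unique supporting hyperplane, hence no normal vector $\nu(x)$, and the characterisation ``$x \in D_i^0 \iff \langle \nu(x), v_i\rangle = 0$'' is not even well-posed; indeed a corner point can lie in $D_v^0$ for an open set of directions $v$, which is exactly the phenomenon the special-corner definition is designed to capture. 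Your hope that regularity and the absence of special corners ``feed in'' to restore the Gauss-map argument cannot be realised: those are conditions on finitely many coordinate projections, not a substitute for a normal vector field, so the reduction of $\bigcap_{i \in I} D_i^0$ to $\nu^{-1}(S^n \cap W_I)$ collapses in the setting of the proposition. (You also misallocate the hypotheses: in the correct argument regularity is what controls the intersections for $\abs{I} \leq n$, while no-special-corners is used only for the emptiness of the full intersection, which is literally its definition.)

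The paper's proof avoids normals entirely and works with the projections themselves. For the first bullet, $p_I(B)$ is a compact strictly-convex set homeomorphic to $\D^{n+1-k}$ by iterating \autoref{projection-is-strictly-convex}, so $\partial p_I(B) \cong S^{n-k}$; iterating \autoref{preimage-of-boundary-is-sphere} shows $p_I$ is injective over $\partial p_I(B)$ and that every preimage point of $\partial p_I(B)$ lies in each $D_i^0$, giving an embedded copy of $S^{n-k}$ inside $\bigcap_{i \in I} D_i^0$; then regularity with respect to the basis (\autoref{defn-regularity}) says precisely that no point of $\bigcap_{i \in I} D_i^0$ projects into the interior of $p_I(B)$, so the embedded sphere is the whole intersection. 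For the second bullet one takes the maximiser and minimiser $x^{\pm}$ of $u \mapsto u \cdot v_j$ on $B$ and shows by strict convexity that no point $x^{\pm} + \lambda v_i$ with $i \neq j$, $\lambda \neq 0$ can lie in $B$, so $x^{\pm} \in \bigcap_{i \neq j} D_i^0$ with $x^{\pm} \in D_j^{\pm}$. The third bullet is immediate: a point of $\bigcap_i D_i^0$ would be a special corner. If you want to salvage your write-up, this projection-based route is the one to follow; your normal-vector picture is a correct and illuminating heuristic only in the smooth special case.
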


We say some $B$ satisfying the outcomes of \autoref{intersections-of-spheres} satisfies the \textit{crosspolytope condition}, continuing \hyperref[section-3]{Section 3} by outlining how this implies the existence of a homeomorphism from an $(n + 1)$-crosspolytope to $B$ which sends the $(n - 1)$-skeleton of the crosspolytope (the set of $k$-faces of the crosspolytope where $k \leq n - 1$) to the union of the spheres $D_i^0$.
\autoref{cube-crosspolytope-balls} provides a homeomorphism from the $(n + 1)$-cube to the $(n + 1)$-crosspolytope with properties that allow us to use the Poincar{\'e}-Miranda theorem (found, for example, in \cite{poincare-miranda}) to prove the following.

\begin{proposition}\label{intersections-of-manifolds}
    Let $K_1, \dots, K_{n + 1}$ be compact subsets of $B$ such that
    \begin{itemize}
        \item $K_i \cap S =: D_i^0$ is homeomorphic to an $(n - 1)$-sphere,
        \item there exist pairs $V_i^{\pm}$ of disjoint open subsets of $B$ such that $U_i^{\pm} \subset V_i^{\pm}$ and the boundary of $V_i^{\pm}$ is contained in $K_i$, and
        \item $B$ satisfies the crosspolytope condition.
    \end{itemize}
    Then the set $\bigcap_{i} K_i$ is non-empty and is contained in the interior of $B$.
\end{proposition}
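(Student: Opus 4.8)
The plan is to transport the whole configuration onto a cube and feed it to the Poincar\'e--Miranda theorem. Using the crosspolytope condition I fix a homeomorphism $h$ from the $(n+1)$-crosspolytope $C$ onto $B$ carrying the $(n-1)$-skeleton of $C$ onto $\bigcup_i D_i^0$; concretely $h$ sends the coordinate slice $E_i = \partial C \cap \set{x_i = 0}$ onto $D_i^0$ and the open hemispheres $\set{x \in \partial C : \pm x_i > 0}$ onto $U_i^{\pm}$. Composing with the homeomorphism $g$ from \autoref{cube-crosspolytope-balls} gives $\phi = h \circ g \colon [-1,1]^{n+1} \to B$, and the sign-respecting property of $g$ guarantees that the facet $\set{x_i = +1}$ of the cube lands inside $U_i^{+} \subseteq V_i^{+}$ and $\set{x_i = -1}$ inside $U_i^{-} \subseteq V_i^{-}$. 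Writing $\kappa_i = \phi^{-1}(K_i)$, it then suffices to produce a common point of the sets $\kappa_i$.

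For each $i$ I would build a continuous ``signed distance'' $F_i \colon [-1,1]^{n+1} \to \R$ vanishing exactly on $\kappa_i$. Since $\phi^{-1}(V_i^{+})$ and $\phi^{-1}(V_i^{-})$ are disjoint open sets whose boundaries lie in $\kappa_i$, each is clopen in $[-1,1]^{n+1} \setminus \kappa_i$, hence a union of connected components of that open set. I set $F_i(x) = \varepsilon_i(x)\,\mathrm{dist}(x,\kappa_i)$, where $\varepsilon_i$ equals $+1$ on the components comprising $\phi^{-1}(V_i^{+})$, equals $-1$ on those comprising $\phi^{-1}(V_i^{-})$, and is chosen arbitrarily (say $+1$) on any remaining components, with $F_i = 0$ on $\kappa_i$. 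The crucial point, which avoids having to assume that $V_i^{+}$, $V_i^{-}$ and $K_i$ exhaust $B$, is that $\mathrm{dist}(\cdot,\kappa_i) \to 0$ at $\kappa_i$, so $F_i$ is continuous for \emph{any} locally constant sign $\varepsilon_i$, while $\abs{F_i} = \mathrm{dist}(\cdot,\kappa_i) > 0$ off $\kappa_i$ forces the zero set of $F_i$ to be exactly $\kappa_i$. Because $\kappa_i \cap \partial([-1,1]^{n+1}) = \partial([-1,1]^{n+1}) \cap \set{x_i = 0}$, the facets $\set{x_i = \pm 1}$ miss $\kappa_i$ and lie in $\phi^{-1}(V_i^{\pm})$, so $F_i > 0$ on $\set{x_i = +1}$ and $F_i < 0$ on $\set{x_i = -1}$.

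These are exactly the hypotheses of the Poincar\'e--Miranda theorem \cite{poincare-miranda} applied to $F = (F_1, \dots, F_{n+1})$ on the cube, which yields a point $x^{\ast}$ with $F_i(x^{\ast}) = 0$ for every $i$. Then $x^{\ast} \in \bigcap_i \kappa_i$, so $\phi(x^{\ast}) \in \bigcap_i K_i$ and the intersection is non-empty. For the containment in $\Int B$ I would argue separately and more cheaply: any $y \in \bigcap_i K_i \cap S$ would lie in $\bigcap_i (K_i \cap S) = \bigcap_i D_i^0$, which is empty by the third clause of \autoref{intersections-of-spheres}; hence $\bigcap_i K_i$ is disjoint from $S$ and so contained in $\Int B$.

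The main obstacle is not the fixed-point input but arranging $\phi$ with the correct boundary behaviour: one must know that $g$ (and hence $\phi$) sends each cube facet $\set{x_i = \pm 1}$ into the corresponding region $V_i^{\pm}$ rather than merely into $B$, and that $\phi^{-1}(K_i)$ meets $\partial([-1,1]^{n+1})$ precisely in the equatorial slice $\set{x_i = 0}$. This is exactly what the properties of \autoref{cube-crosspolytope-balls} together with the crosspolytope condition are there to supply. The only other point needing care is continuity of $F_i$ across $\kappa_i$, which the bound $\abs{F_i} \le \mathrm{dist}(\cdot,\kappa_i)$ handles uniformly, independently of how the signs $\varepsilon_i$ are distributed among the components of the complement.
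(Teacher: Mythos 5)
Your proposal is correct and takes essentially the same route as the paper: both use the crosspolytope condition together with \autoref{cube-crosspolytope-balls} to parametrize $B$ by the $(n+1)$-cube so that opposite facets land in $U_i^{+}$ and $U_i^{-}$, both build continuous signed distance functions vanishing exactly on the $K_i$, both invoke the Poincar\'e--Miranda theorem, and both obtain containment in the interior from the emptiness of $\bigcap_i D_i^0$. The only cosmetic difference is how the sign is assigned: you distribute signs over the connected components of the complement of $\kappa_i$, while the paper takes $+d_i$ on $V_i^{+}$ and $-d_i$ on its complement, glued continuously along $\partial V_i^{+} \subset K_i$.
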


Our \hyperref[main-theorem]{Theorem} follows from this.

\section*{Acknowledgements}
The author expresses his deepest gratitude to Andrew Lobb for his supervision and guidance throughout the summer of 2021, as well as for proofreading this paper, and drawing the author's attention to \cite{fung} which inspired much of the paper.
The author also expresses his thanks to the London Mathematical Society and Durham University for awarding him funding through the London Mathematical Society's Undergraduate Research Bursary scheme.

\section{Construction of Medians}\label{section-2}

\begin{definition}
    Let $B$ be a subset of $\R^{n + 1}$.
    We call $B$ \textit{convex} if for all $x, y \in B$, all points on the straight line from $x$ to $y$ are contained in $B$.
    Furthermore, $B$ is \textit{strictly-convex} if all points on the straight line from $x$ to $y$ are contained in the interior of $B$ (except possibly $x$ and $y$ themselves).
\end{definition}

The paper \cite{hyperrhombs} deals with compact convex subsets of $\R^{n + 1}$ rather than the image of an embedding of $S^n$, but the following proposition (Proposition 5.1 in \cite{lee}) reveals that these are two instances of the same thing.

\begin{proposition}\label{characterisation-of-disc}
    If $B$ is a compact convex subset of $\R^{n + 1}$ with non-empty interior, then $B$ is homeomorphic to $\D^{n + 1}$ and $\partial B$ is homeomorphic to $\partial \D^{n + 1} = S^n$.
\end{proposition}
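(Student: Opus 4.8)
The plan is to build an explicit radial homeomorphism from the \emph{Minkowski gauge} of $B$. Since $B$ has non-empty interior, first fix an interior point and translate so that $0 \in \Int(B)$. Compactness gives boundedness and the interior point gives an interior ball, so there are radii $0 < r < R$ with $\set{x : \norm{x} \le r} \subseteq B \subseteq \set{x : \norm{x} \le R}$. Define the gauge
\[ g(x) = \inf \set{ t > 0 : x/t \in B }. \]
The two inclusions give the sandwich $\norm{x}/R \le g(x) \le \norm{x}/r$, so $g$ is finite on $\R^{n+1}$ and positive away from the origin.

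The first block of work is to record the standard properties of $g$. Positive homogeneity $g(\lambda x) = \lambda g(x)$ for $\lambda \ge 0$ is immediate from the definition. Subadditivity $g(x + y) \le g(x) + g(y)$ is where convexity of $B$ enters: $x/g(x)$ and $y/g(y)$ lie in $B$ (using that $B$ is closed), and $(x+y)/(g(x)+g(y))$ is a convex combination of them, hence in $B$. Subadditivity with homogeneity makes $g$ a finite convex function on $\R^{n+1}$, therefore continuous. The second block identifies the sublevel sets: using that $B$ is closed and convex with $0$ interior, I would show $B = \set{x : g(x) \le 1}$ and $\Int(B) = \set{x : g(x) < 1}$, whence $\partial B = \set{x : g(x) = 1}$. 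Only the inclusion $\set{g < 1} \subseteq \Int(B)$ needs care: if $g(x) < t < 1$ with $x/t \in B$, then $x + (1-t)\set{w : \norm{w} < r}$ consists of convex combinations of $x/t \in B$ and the interior ball, so it is a neighbourhood of $x$ inside $B$.

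With $g$ in hand, define $f : B \to \D^{n+1}$ by $f(0) = 0$ and $f(x) = \frac{g(x)}{\norm{x}} x$ for $x \ne 0$, so that $\norm{f(x)} = g(x)$ and the ray through $x$ is preserved. The candidate inverse is $f^{-1}(0) = 0$ and $f^{-1}(y) = \frac{\norm{y}}{g(y)} y$ for $y \ne 0$; a short computation using homogeneity verifies both composites are the identity. Away from $0$, both $f$ and $f^{-1}$ are continuous because $g$ is continuous and nonzero there; at $0$ continuity follows from the sandwich, since $\norm{f(x)} = g(x) \le \norm{x}/r \to 0$ and $\norm{f^{-1}(y)} \le R\,\norm{y} \to 0$. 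Since $f$ is a continuous bijection from the compact space $B$ to the Hausdorff space $\D^{n+1}$, it is automatically a homeomorphism; in fact only continuity of $f$ itself needs checking. Finally, because $\norm{f(x)} = g(x) = 1$ precisely on $\partial B$, the map $f$ carries $\partial B$ bijectively onto $S^n$, and restricting the homeomorphism gives $\partial B \cong S^n$.

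I expect the main obstacle to be the two places where convexity and the interior point are genuinely used, namely subadditivity of $g$ and the identification $\Int(B) = \set{g < 1}$; the homogeneity, the composite-identity check, and the continuity estimates are all formal, and the upgrade from continuous bijection to homeomorphism is a standard compactness remark.
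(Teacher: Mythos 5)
Your proof is correct, but note that the paper itself does not prove this proposition at all: it is quoted verbatim as Proposition 5.1 of the cited reference \cite{lee}, so there is no internal argument to compare against. Your Minkowski gauge construction is the standard proof of that cited result, and it is essentially equivalent to the radial-projection argument given in Lee's book: there one shows each ray from an interior point meets $\partial B$ exactly once and builds the homeomorphism by radial rescaling, whereas you encode the same geometry in the function $g$, with the ray--boundary intersection point being $x/g(x)$. What the gauge packaging buys you is that the analytic work is absorbed into soft facts (subadditivity from convexity, continuity of finite convex functions --- or, even more directly, the Lipschitz bound $\abs{g(x) - g(y)} \leq g(x - y) \leq \norm{x - y}/r$ from subadditivity and the sandwich), and the final upgrade from continuous bijection to homeomorphism is free by compactness, as you observe. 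The only points deserving care are exactly the ones you flag: attainment of the infimum (so $x/g(x) \in B$) uses closedness of $B$, the inclusion $\set{g < 1} \subseteq \Int(B)$ uses the interior ball via the convex-combination neighbourhood $x + (1 - t)\set{w : \norm{w} < r}$, and the degenerate case $x = 0$ (where $g(x) = 0$) must be handled separately in the subadditivity and continuity checks, which is routine. So your proposal is a complete, self-contained substitute for the external citation.
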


From now on, let $B$ be a compact strictly-convex subset of $\R^{n + 1}$ with non-empty interior.
It follows that $B$ is homeomorphic to $\D^{n + 1}$, and is bounded by an $n$-sphere $S$.
An embedding of $\D^{n + 1}$ into $\R^{n + 1}$ with strictly-convex image $B$ is referred to as a \textit{strictly-convex embedding} of $\D^{n + 1}$, and we use the term strictly-convex embedding of $S^n$ to refer to the restriction to $\partial \D^{n + 1}$ of a strictly-convex embedding of $\D^{n + 1}$.
Note that the results in this section are independent of regularity and the presence of special corners in $B$.

The projection $p_v : \R^{n + 1} \to \R$ is the quotient map of the equivalence relation
\[ x \sim y \iff x = y + \lambda v, \qquad \lambda \in \R. \]

\begin{lemma}\label{projection-is-strictly-convex}
    The set $p_v(B)$ is a compact strictly-convex subset of $\R^n$ with non-empty interior (and is therefore homeomorphic to $\D^n$).
\end{lemma}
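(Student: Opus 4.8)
The plan is to verify each claimed property of $p_v(B)$ in turn, deducing the final homeomorphism from \autoref{characterisation-of-disc}. Throughout I would exploit two elementary features of the orthogonal projection $p_v$: that it is continuous, and that it is an \emph{open} map. The latter is seen most concretely by noting that $p_v$ is $1$-Lipschitz and surjective, and that it sends any open ball of $\R^{n + 1}$ onto the open ball of the same radius about the projected centre inside the target hyperplane. In particular $p_v$ carries open sets to open sets, so if $z \in \Int(B)$ then a small open ball around $z$ lies in $B$ and its image is an open neighbourhood of $p_v(z)$ contained in $p_v(B)$; thus $p_v$ sends interior points of $B$ to interior points of $p_v(B)$. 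This last observation is the engine of the whole argument.

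Compactness and non-empty interior are then immediate. Since $B$ is compact and $p_v$ is continuous, $p_v(B)$ is compact. Since $B$ has non-empty interior, it contains an open ball, whose image under $p_v$ is an open set contained in $p_v(B)$; hence $p_v(B)$ has non-empty interior.

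For ordinary convexity I would take $y_1, y_2 \in p_v(B)$ and choose preimages $x_1, x_2 \in B$. Linearity of $p_v$ gives $p_v(t x_1 + (1 - t) x_2) = t y_1 + (1 - t) y_2$ for all $t \in [0, 1]$, while convexity of $B$ places $t x_1 + (1 - t) x_2$ in $B$; so the whole segment from $y_1$ to $y_2$ lies in $p_v(B)$.

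The one step requiring genuine care is strict convexity, and this is where the openness of $p_v$ does the real work; I expect it to be the main subtlety rather than a routine computation. Given distinct $y_1, y_2 \in p_v(B)$, choose preimages $x_1, x_2 \in B$; because $p_v(x_1) \neq p_v(x_2)$ we must have $x_1 \neq x_2$. Strict convexity of $B$ then puts the open segment between $x_1$ and $x_2$ inside $\Int(B)$, so for each $t \in (0, 1)$ the point $z_t = t x_1 + (1 - t) x_2$ lies in $\Int(B)$. Since $p_v$ sends interior points to interior points, $p_v(z_t) = t y_1 + (1 - t) y_2 \in \Int(p_v(B))$, which is exactly strict convexity of $p_v(B)$. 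Having shown $p_v(B)$ to be a compact strictly-convex subset of $\R^n$ with non-empty interior, \autoref{characterisation-of-disc} finally yields that it is homeomorphic to $\D^n$.
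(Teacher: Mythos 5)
Your proposal is correct and follows essentially the same route as the paper: compactness from continuity, non-empty interior and strict convexity both obtained by projecting an open ball around an interior point of $B$ to an open ball around its image, exactly the paper's mechanism phrased as ``$p_v$ is an open map.'' Your version is slightly more careful in places (separating ordinary from strict convexity and restricting to $t \in (0,1)$ and distinct $y_1, y_2$), but there is no substantive difference in the argument.
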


\begin{proof}
    Note that $p_v(B)$ is compact as it is the continuous image of a compact set.
    Also $p_v(B)$ is strictly-convex; indeed, for all $x, y \in p_v(B)$, there exist corresponding $x + rv, y + sv \in B$, so $z := x + rv + \lambda(y + sv - x - rv) \in B$ for all $\lambda \in [0, 1]$.
    We have
    \[ p_v(z) = p_v(x + \lambda(y - x) + v(r + \lambda(s - r))) = x + \lambda(y - x), \]
    and $z$ is contained in the interior of $B$ by strict-convexity, so there is an open ball around $z$ contained in $B$ which projects to an open ball around $p_v(z)$ contained in $p_v(B)$.
    The set $B$ has non-empty interior, so there exists some $z$ in the interior of $B$ which is contained in an open ball contained in $B$, which projects to an open ball around $p_v(z)$ in $p_v(B)$.
    Hence $p_v(B)$ is a compact convex subset of $\R^n$ with non-empty interior and is homeomorphic to $\D^n$ by \autoref{characterisation-of-disc}.
\end{proof}

It follows that the boundary of $p_v(B)$ is homeomorphic to $S^{n - 1}$.
Let $D_v^0$ be the preimage of $\partial p_v(B)$ under $p_v : B \to \R^n$.

\begin{lemma}\label{preimage-of-boundary-is-sphere}
    The set $D_v^0$ is contained in $S$, and $p_v : D_v^0 \to \partial p_v(B)$ is a homeomorphism.
\end{lemma}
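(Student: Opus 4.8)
The plan is to prove the two assertions separately, both relying on the observation (already exploited in the proof of \autoref{projection-is-strictly-convex}) that $p_v$ carries open balls in $\R^{n + 1}$ onto open balls in $\R^n$, and in particular is an open map.

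First I would check that $D_v^0 \subseteq S$. Suppose towards a contradiction that some $x \in D_v^0$ lies in the interior of $B$. Then there is an open ball around $x$ contained in $B$, and its image under $p_v$ is an open ball around $p_v(x)$ contained in $p_v(B)$. This forces $p_v(x) \in \Int(p_v(B))$, contradicting $p_v(x) \in \partial p_v(B)$. Hence no point of $D_v^0$ is interior to $B$, so $D_v^0 \subseteq \partial B = S$.

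For the homeomorphism I would reduce to a bijection statement. The set $D_v^0$ is the preimage of the closed set $\partial p_v(B)$ under the continuous map $p_v$, hence closed in the compact set $B$ and therefore itself compact; and $\partial p_v(B)$ is Hausdorff. Since a continuous bijection from a compact space to a Hausdorff space is automatically a homeomorphism, it suffices to show that $p_v : D_v^0 \to \partial p_v(B)$ is a continuous bijection. Continuity is immediate, and surjectivity is clear, as every $y \in \partial p_v(B) \subseteq p_v(B)$ has some preimage $x \in B$, which then lies in $D_v^0$ by definition.

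The crux is injectivity, and this is the step where strict convexity is indispensable. Suppose two distinct points $x_1, x_2 \in D_v^0$ satisfy $p_v(x_1) = p_v(x_2) = y$. Because they share a projection, $x_2 - x_1$ is a nonzero multiple of $v$, so by strict convexity the open segment joining them lies in $\Int(B)$; in particular its midpoint $m$ is an interior point of $B$ with $p_v(m) = y$ by linearity of $p_v$. An open ball around $m$ then projects to an open neighbourhood of $y$ in $p_v(B)$, forcing $y \in \Int(p_v(B))$ and contradicting $y \in \partial p_v(B)$. Thus each fibre over a boundary point is a singleton, completing the bijection. The main obstacle is precisely recognising that injectivity, rather than the set-level inclusion $D_v^0 \subseteq S$, is where mere convexity must be upgraded to strict convexity: for a non-strictly convex $B$, the projection could collapse an entire boundary segment parallel to $v$ onto a single boundary point of $p_v(B)$, destroying the bijection.
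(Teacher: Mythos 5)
Your proposal is correct and follows essentially the same route as the paper's own proof: inclusion $D_v^0 \subseteq S$ via projecting an open ball around an interior point, injectivity over boundary points via the midpoint of two fibre points lying in $\Int(B)$ by strict convexity, and the conclusion via the standard fact that a continuous injection (bijection) from a compact space to a Hausdorff space is a homeomorphism onto its image. The only cosmetic difference is that you phrase the inclusion and injectivity steps as arguments by contradiction where the paper argues directly, and you explicitly note surjectivity, which the paper gets from $p_v(S) = p_v(B)$.
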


\begin{proof}
    Let $x$ be a point in the interior of $B$.
    Then $x$ is contained in an open ball contained within $B$, the image of which is an open ball around $p_v(x)$ in $p_v(B)$, so $p_v(x)$ lies in the interior of $p_v(B)$.
    Hence $S$ contains all points in $B$ which project to $\partial p_v(B)$.
    Note that $D_v^0$ is closed as it is the preimage of a closed set under the continuous function $p_v$, and therefore compact as it is a closed subset of a compact space.
    We now prove that for each $y \in \partial p_v(B)$ there exists a unique $x \in S$ with $p_v(x) = y$.
    At least one such $x$ exists as $p_v(S) = p_v(B)$.
    If this $x$ is non-unique i.e.\ some other $x'$ with the property exists then by definition of $p_v$ we also have
    \[ p_v\left( \frac{x + x'}{2} \right) = y, \]
    but by strict-convexity, $\frac{x + x'}{2}$ is contained in the interior of $B$.
    Hence there exists some open ball around $\frac{x + x'}{2}$ contained in $B$.
    Such an open ball projects to an open ball in $p_v(B)$ around $y$, so $y \notin \partial p_v(B)$.
    Hence no such $x'$ exists, so $p_v$ is injective when restricted to $D_v^0$.
    Since $p_v$ restricted to $D_v^0$ is a continuous injection from a compact set to a Hausdorff space, it is a homeomorphism onto its image (which is the boundary of $p_v(B)$).
\end{proof}

Our aim now is to split the sphere into an upper region $U_v^+$ and a lower region $U_v^-$.

\begin{lemma}\label{interior-pair-of-points}
    Let $x$ be a point in the interior of $p_v(B)$.
    Then there exist exactly two distinct points $x^+, x^- \in S$ with $p_v(x^+) = p_v(x^-) = x$.
\end{lemma}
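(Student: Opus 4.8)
The plan is to study the fibre of $p_v$ over $x$, namely the line $\ell := p_v^{-1}(x)$, which runs in the direction $v$. Since $B$ is compact and convex and $\ell$ is a line, the intersection $\ell \cap B$ is a compact convex subset of $\ell$, hence a closed bounded interval; it is non-empty because $x \in p_v(B)$. Writing its endpoints as $x^-$ and $x^+$ (ordered by the coordinate along $v$), I would aim to show that (i) the interval is non-degenerate, so $x^- \neq x^+$; (ii) both endpoints lie on $S$; and (iii) no other point of $S$ lies on $\ell$. Together these produce exactly the two required points.

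The key step, and the place where the hypothesis $x \in \Int p_v(B)$ (rather than merely $x \in p_v(B)$) is essential, is (i): ruling out that $\ell$ is tangent to $B$ in a single point. For this I would produce a point of $\ell$ lying in $\Int B$. Fix an interior point $c$ of $B$ (which exists since $B$ has non-empty interior) and set $x_0 := p_v(c)$; by the argument in the proof of \autoref{preimage-of-boundary-is-sphere}, $x_0 \in \Int p_v(B)$. If $x = x_0$ we are done with $c$ itself; otherwise, because $\Int p_v(B)$ is open, I can extend the segment from $x_0$ through $x$ slightly past $x$ to a point $x' \in p_v(B)$, so that $x = (1 - \lambda) x_0 + \lambda x'$ for some $\lambda \in (0, 1)$. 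Choosing $w \in B$ with $p_v(w) = x'$ and using that $p_v$ is linear, the point $z := (1 - \lambda) c + \lambda w$ satisfies $p_v(z) = x$; moreover $z$ lies on the open segment between the interior point $c$ and the point $w \in B$, hence $z \in \Int B$ by convexity. Thus $\ell$ meets $\Int B$, so $\ell \cap B$ contains a whole neighbourhood of $z$ within $\ell$ and is therefore non-degenerate.

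Granting (i), step (ii) is immediate: points of $\ell$ just beyond either endpoint $x^{\pm}$ lie outside $B$, so $x^{\pm} \notin \Int B$, whence $x^{\pm} \in S$. For step (iii) I would invoke strict convexity: since $x^-$ and $x^+$ are now two distinct points of $B$, the open segment $(x^-, x^+)$ is contained in $\Int B$ and so contains no point of $S$. As every point of $S \cap \ell$ must lie in $\ell \cap B = [x^-, x^+]$, it follows that $S \cap \ell = \{x^-, x^+\}$, two distinct points. I expect the main obstacle to be step (i); steps (ii) and (iii) are routine once non-degeneracy and strict convexity are in hand.

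As an alternative for (i), non-degeneracy can be obtained by a supporting-hyperplane argument: a single-point intersection $\ell \cap B = \{p\}$ would force a supporting hyperplane $H$ of $B$ at $p$ containing $\ell$, whence $v \in H$ and $p_v(H)$ is a hyperplane in $\R^n$ with $p_v(B)$ lying on one side of it and $x = p_v(p) \in p_v(H)$, contradicting $x \in \Int p_v(B)$. I would present the lifting argument above as the primary route, since it is self-contained and reuses \autoref{preimage-of-boundary-is-sphere} directly.
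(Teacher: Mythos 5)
Your proposal is correct and takes essentially the same approach as the paper: both analyse the fibre line $\ell = p_v^{-1}(x)$, take the two extreme points of the compact set $\ell \cap B$ as $x^{\pm}$, show they lie on $S$, rule out any third point of $S$ on $\ell$ by strict convexity, and prove $x^+ \neq x^-$ by exhibiting a point of $\Int B$ on $\ell$ using the hypothesis $x \in \Int p_v(B)$. The only cosmetic difference lies in how that interior point is produced --- the paper lifts a chord through $x$ to two points $a', b' \in S$ and applies strict convexity to the segment $[a', b']$, whereas you take a convex combination of a fixed interior point $c$ of $B$ with a point lifted from slightly beyond $x$ --- but the two constructions play identical roles.
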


\begin{proof}
    We know that one such point must exist (as $p_v(S) = p_v(B)$) and if three such points exist then they all lie on the line of the form $x + \lambda v$, so one of the points must lie between the other two.
    This point is then in the interior of $B$ by strict-convexity, which is a contradiction.
    We now show that exactly two such points must exist.
    Consider the line $l$ of the form $x + \lambda v$.
    Viewed as a subset of $\R^{n + 1}$, $l$ is closed, and $l \cap B$ is compact.
    Define the function
    \begin{gather*}
        f : l \cap B \to \R,\\
        f(u) = u \cdot v.
    \end{gather*}
    This function attains a minimum and a maximum in $l \cap B$, and we show that this minimum and maximum are distinct.
    Let $x^+$ be the point at which $f$ attains its maximum, and $x^-$ the minimum.
    Any open ball around $x^{\pm}$ contains a point (itself) in $l$ contained in $B$ and points in $l$ (of the form $x^{\pm} \pm \epsilon v_i$) not contained in $B$.
    Hence $x^{\pm}$ are elements of the boundary of $B$ i.e.\ elements of $S$.
    The point $x^+$ projects to $x$ which is contained in the interior of $p_v(B)$, so is contained in some open ball in the interior of $p_v(B)$.
    Pick points $a, b$ in this open ball such that the line from $a$ to $b$ contains the point $x$.
    Then there exist corresponding points $a', b' \in S$ which project to $a$ and $b$ respectively.
    The line from $a'$ to $b'$ contains a point $x'$ with $p(x') = x$, and by strict-convexity, $x'$ lies in the interior of $B$.
    Hence $x' \neq x^+, x^-$, so $f(x^-) < f(x') < f(x^+)$, implying that $x^- \neq x^+$.
\end{proof}

\begin{figure}[h]
    \centering
    \includegraphics[width=\textwidth]{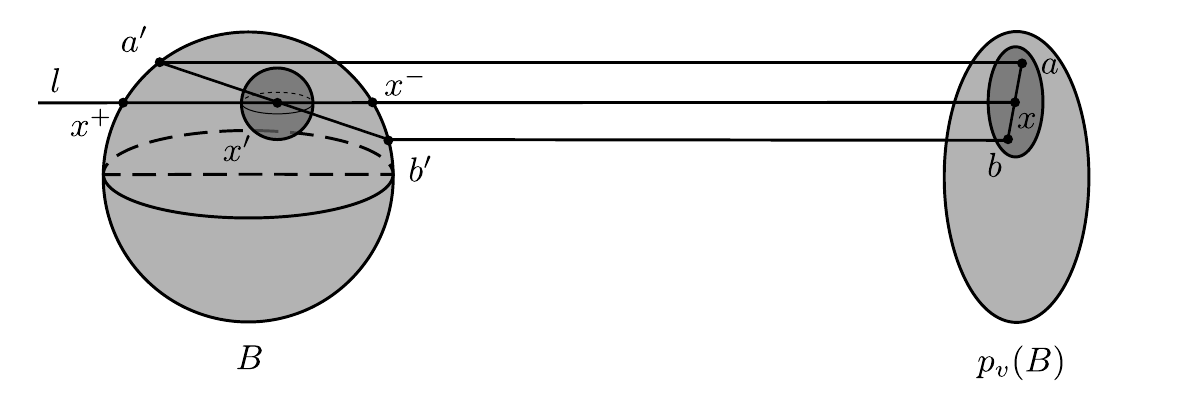}
    \caption{Proof of \autoref{interior-pair-of-points}.}
\end{figure}

We define the sets
\[ U_v^+ = \set{x^+ : x \in \Int p_v(B)}, \qquad U_v^- = \set{x^- : x \in \Int p_v(B)}, \]
which are disjoint.
It is clear that the sets $D_v^{\pm} := D_v^0 \cup U_v^{\pm}$ are in bijection with $p_v(B)$.
Our next aim is to show that $U_v^{\pm}$ are open sets, and therefore the sets $D_v^{\pm}$ are compact.

\begin{lemma}\label{top-and-bottom-open}
    The sets $U_v^+$ and $U_v^-$ are open subsets of $S$.
\end{lemma}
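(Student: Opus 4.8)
The plan is to exhibit $U_v^+$ and $U_v^-$ as preimages of open half-lines under a single continuous function on $S \setminus D_v^0$, whose sign distinguishes the upper points from the lower ones. Since $v$ is a unit vector, I would identify $\R^{n+1}$ with $v^\perp \oplus \R v$ and write each $u \in \R^{n+1}$ as $u = p_v(u) + (u \cdot v)v$. For $y \in p_v(B)$ the fibre $(y + \R v) \cap B$ is compact and non-empty, so I can define the heights
\[ h^+(y) = \max \set{t \in \R : y + tv \in B}, \qquad h^-(y) = \min \set{t \in \R : y + tv \in B}. \]
When $y \in \Int p_v(B)$ the points $y + h^+(y)v$ and $y + h^-(y)v$ are exactly the two points $x^+, x^-$ produced by \autoref{interior-pair-of-points}, so $U_v^+ = \set{y + h^+(y)v : y \in \Int p_v(B)}$ and $U_v^- = \set{y + h^-(y)v : y \in \Int p_v(B)}$.

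The key step is the continuity of $h^+$ and $h^-$ on $\Int p_v(B)$, which I would derive from convexity of $B$. If $y_0, y_1 \in p_v(B)$ and $y_\lambda = (1 - \lambda)y_0 + \lambda y_1$, then applying convexity to the points $y_0 + h^+(y_0)v$ and $y_1 + h^+(y_1)v$ of $B$ shows $h^+(y_\lambda) \geq (1 - \lambda)h^+(y_0) + \lambda h^+(y_1)$, so $h^+$ is concave; the symmetric argument shows $h^-$ is convex. A concave (respectively convex) function on a convex set is continuous on the interior of its domain, and $p_v(B)$ is full-dimensional by \autoref{projection-is-strictly-convex}, so $h^+$ and $h^-$ are continuous on $\Int p_v(B)$.

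With continuity in hand I would finish as follows. By \autoref{preimage-of-boundary-is-sphere}, $D_v^0$ consists precisely of the points of $S$ projecting to $\partial p_v(B)$, so $p_v$ maps $S \setminus D_v^0$ continuously into $\Int p_v(B)$. Hence the function
\[ F : S \setminus D_v^0 \to \R, \qquad F(u) = u \cdot v - \frac{h^+(p_v(u)) + h^-(p_v(u))}{2} \]
is continuous. For $u \in S \setminus D_v^0$ with $y = p_v(u)$, \autoref{interior-pair-of-points} forces $u \cdot v \in \set{h^+(y), h^-(y)}$ with $h^-(y) < h^+(y)$, so $F(u) = \tfrac{1}{2}(h^+(y) - h^-(y)) > 0$ when $u \in U_v^+$ and $F(u) = \tfrac{1}{2}(h^-(y) - h^+(y)) < 0$ when $u \in U_v^-$. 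Therefore $U_v^+ = F^{-1}((0, \infty))$ and $U_v^- = F^{-1}((-\infty, 0))$ are open in $S \setminus D_v^0$, which is open in $S$, and both sets are open in $S$.

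I expect the only genuine obstacle to be the continuity of the fibre-height functions. The concavity/convexity observation reduces it to the standard fact that convex functions are continuous on the interiors of their domains; should a self-contained argument be preferred, one can instead prove upper semicontinuity of $h^+$ directly from compactness of $B$ (every subsequential limit of $h^+(y_k)$ yields a point of $B$ lying over $\lim y_k$) together with lower semicontinuity from the concavity inequality, and dually for $h^-$.
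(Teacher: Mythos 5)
Your proof is correct, but it follows a genuinely different route from the paper's. The paper argues locally and topologically: given $x^+ \in U_v^+$ with partner $x^- \in U_v^-$, strict convexity puts the midpoint $y = \frac{x^+ + x^-}{2}$ in the interior of $B$, so some $\epsilon$-ball around $y$ lies in $\Int B$; translating the $\epsilon$-ball around $x^+$ by $-rv$ (where $r = d(x^+, y)$) lands inside that interior ball, so no point of $S$ within $\epsilon$ of $x^+$ can minimise $u \mapsto u \cdot v$ on its fibre, hence this neighbourhood misses $U_v^-$; a further shrinking, separating $x^+$ from the compact set $D_v^0$ in the Hausdorff space $S$, then yields a neighbourhood contained in $U_v^+$, since $S$ is the disjoint union of $U_v^+$, $U_v^-$ and $D_v^0$. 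You instead argue globally: you parametrise the upper and lower parts of $S$ as graphs of the fibre-height functions $h^{\pm}$, deduce concavity of $h^+$ and convexity of $h^-$ from convexity of $B$, invoke the standard fact that a finite convex function on a convex body is continuous on the interior of its domain, and realise $U_v^+ = F^{-1}\left( (0, \infty) \right)$ and $U_v^- = F^{-1}\left( (-\infty, 0) \right)$ for a continuous sign function $F$ on the open set $S \setminus D_v^0$; the identification of these preimages with $U_v^{\pm}$ correctly rests on $S \setminus D_v^0 = U_v^+ \cup U_v^-$, which is exactly \autoref{interior-pair-of-points}. Each approach buys something: the paper's is self-contained, using nothing beyond metric topology and strict convexity; yours imports a convex-analysis fact (or needs the semicontinuity argument you sketch), but it delivers more structure. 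In particular it exhibits $U_v^{\pm}$ as graphs of continuous functions over $\Int p_v(B)$, so that $p_v$ restricted to $U_v^{\pm}$ is immediately a homeomorphism onto the open disc, and the graph of $\frac{1}{2}(h^+ + h^-)$ over $\Int p_v(B)$ is precisely $\MM_v \setminus S$, which would also streamline the proof of \autoref{median-is-disc}. It is worth noting that continuity of $h^{\pm}$ uses only convexity of $B$; strictness enters your argument solely through \autoref{interior-pair-of-points}.
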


\begin{proof}
    Let $x^+ \in U_v^+, x^- \in U_v^-$ with $p_v(x^+) = p_v(x^-)$.
    Then the point $y = \frac{x^+ + x^-}{2}$ is in the interior of $B$ by strict-convexity.
    Hence there exists $\epsilon > 0$ such that the ball of radius $\epsilon$ around $y$ is contained in the interior of $B$.
    We have $x^+ \neq y$, so let $r := d(x^+, y) > 0$.
    Then for each point $x'$ in the ball of radius $\epsilon$ around $x^+$, the point $x' - rv$ is contained in the ball of radius $\epsilon$ around $y$, and hence in the interior of $B$.
    For $f$ as defined in the previous lemma, we therefore have $f(x') > f(x' - rv)$, so $x' \notin U_v^-$.
    Therefore the open set $V$ defined by the intersection of $S$ with the open ball of radius $\epsilon$ around $x^+$ is disjoint from $U_v^-$.
    Because $S$ is Hausdorff and $D_v^0$ is a compact subset of $S$, we can find an open subset of $S$ containing $x^+$ contained within $V$ that is sufficiently small to be disjoint from $D_v^0$, and is therefore completely contained in $U_v^+$.
    A similar argument holds for $U_v^-$.
\end{proof}

\begin{figure}[htbp]
    \centering
    \includegraphics{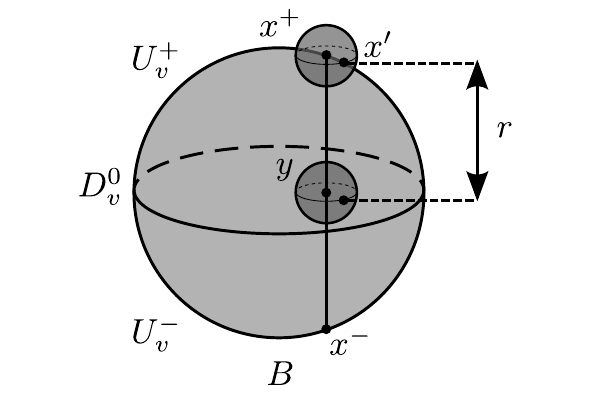}
    \caption{Proof of \autoref{top-and-bottom-open}.}
\end{figure}

Since $D_v^{\pm} = S \setminus U_v^{\mp}$, it follows that the sets $D_v^{\pm}$ are closed, and therefore compact as they are closed subsets of a compact space.
The function $p_v$ restricted to $D_v^{\pm}$ is then a continuous injection from a compact space to a Hausdorff space, so is a homeomorphism onto its image, which is the disc $p_v(B)$.

Define the set
\[ \MM_{v} := \set{\frac{x^+ + x^-}{2} : x^+ \in D_v^+, x^- \in D_v^-, p_v(x^+) = p_v(x^-)}. \]
Echoing Fung's terminology, we refer to $\MM_v$ as the \textit{median} of $S$ in direction $v$.

We are now able to prove \autoref{median-is-disc}. We note the similarities between our functions $p_v, f, g$ and the functions $\pi_{\theta}, f_{\theta}, g_{\theta}$ used in Claim 3.3 of \cite{fung}.

\begin{proof}[Proof (of \autoref{median-is-disc})]
    Define the functions
    \begin{gather*}
        f : D_v^+ \times D_v^- \to \R^n,\\
        f(x^+, x^-) = p_v(x^+) - p_v(x^-),\\
        g : D_v^+ \times D_v^- \to \R^{n + 1},\\
        g(x^+, x^-) = \frac{x^+ + x^-}{2},
    \end{gather*}
    which are both continuous.
    We have $\MM_v = g(f^{-1}(0))$.
    The set $f^{-1}(0)$ is closed as it is the preimage of a closed set, and therefore compact as it is a subset of a compact space.
    Then $g(f^{-1}(0))$ is compact as it is the continuous image of a compact space.
    Hence the restriction of $p_v$ to $\MM_v$ is a continuous injection from a compact space to a Hausdorff space, so is a homeomorphism onto its image, which is the disc $p_v(B)$.
    Furthermore, $\MM_v \cap S = D_v^0$ by strict-convexity, since all other points in $\MM_v$ are midpoints of a distinct pair of points in $B$, so lie in the interior of $B$.
\end{proof}

\section{Intersection of Compact Sets Embedded in $B$}\label{section-3}

Given an orthonormal basis $\set{v_1, \dots, v_{n + 1}}$ of $\R^{n + 1}$, we refer to the projection $p_{v_i}$ as $p_i$, and other objects associated with these vectors (such as $D_{v_i}^0$ and $U_{v_i}^{\pm}$) similarly.
Given $I \subset \set{1, \dots, n + 1}$ with $k := \abs{I} \leq n$, we denote by $p_I : B \to \R^{n + 1 - k}$ the composition of the projections $p_i$ for all $i \in I$.
Note that these projection maps commute (so $p_I$ is independent of the order of composition) and are idempotent (that is, $p_i \circ p_i = p_i$).

The paper \cite{hyperrhombs} refers to a convex subset $B$ of $\R^{n + 1}$ such that
\begin{enumerate}
    \item each point $p$ of the boundary of $B$ has a unique \textit{supporting hyperplane} $\Pi$ ($B$ is disjoint from one of the open half-spaces determined by $\Pi$) containing $p$, and
    \item $\Pi \cap B$ = $\set{p}$,
\end{enumerate}
as \textit{regular}.
An equivalent condition is that $B$ is a strictly-convex subset of $\R^{n + 1}$ with smooth boundary sphere $S$.
The fact that (1) implies smoothness is a consequence of Theorems 23.2 and 25.1 of \cite{rockafellar} (as indicated in \cite{yang}), and the addition of (2) excludes any linear parts of the boundary, implying strict-convexity.

\begin{definition}\label{defn-regularity}
    Let $B$ be the image of a strictly-convex embedding of $\D^{n + 1}$ into $\R^{n + 1}$.
    We say $B$ is \textit{regular} with respect to the orthonormal basis $\set{v_1, \dots, v_{n + 1}}$ of $\R^{n + 1}$ if there is no point $x$ such that $p_i(x) \in \partial p_i(B)$ for all $i \in I$ but $p_I(x) \notin \partial p_I(B)$.
\end{definition}

\autoref{preimage-of-boundary-is-sphere} says that $x \in B$ projects to the boundary of $p_i(B)$ exactly when there are no other elements $y \in B$ with $p_i(x) = p_i(y)$. Any such element $y$ would be of the form $x + \lambda v_i$ for some $\lambda \neq 0$.
Hence an alternative formulation of this condition is: if $x \in B$ such that $x + \lambda_i v_i \notin B$ for all $i \in I, \lambda_i \neq 0$, then $x + \sum_{i \in I} \lambda_i v_i \notin B$ for all $\sum_{i \in I} \abs{\lambda_i} \neq 0$.

\begin{remark}
    This is a generalisation of the regularity presented in \cite{hyperrhombs} in the following sense.
    Let $B$ be the image of an embedding of $\D^{n + 1}$ into $\R^{n + 1}$ bounded by a smoothly embedded sphere $S$, and let $x \in S$.
    If $p_v(x)$ is in the boundary of $p_v(B)$ then \autoref{preimage-of-boundary-is-sphere} tells us that the line $x + \lambda v$ intersects $S$ only at $x$ i.e.\ is tangent to $S$ at $x$.
    Since the tangent hyperplane at $x$ is an affine space, if the lines $x + \lambda v_i$ are tangent to $S$ at $x$ for all $i \in I$, then the tangent hyperplane contains each point of the form $x + \sum_{i \in I} \lambda_i v_i$.
    The tangent hyperplane is the unique supporting hyperplane at $x$ and the only element of $B$ contained in this hyperplane is $x$.
    This is equivalent to the alternative formulation of regularity with respect to a basis, so every regular embedding in the sense of \cite{hyperrhombs} is regular with respect to every basis.
\end{remark}

\begin{definition}\label{defn-special-corner}
    Let $B$ be the image of a strictly-convex embedding of $\D^{n + 1}$ into $\R^{n + 1}$.
    We say $x \in B$ is a \textit{special corner} of $B$ with respect to the basis $\set{v_1, \dots, v_{n + 1}}$ of $\R^{n + 1}$ if $x \in \partial p_i(B)$ for all $i = 1, \dots, n + 1$.
\end{definition}

\begin{remark}
    A special corner of angle $\theta$ of a Jordan curve $\gamma$ in the sense of \cite{fung} is a point $x \in \im \gamma$ such that the lines through $x$ with gradients $\tan\theta$ and $\tan\theta + \frac{\pi}{2}$ intersect $\im \gamma$ only at $x$.
    If we let $B$ be the closed, bounded region determined by $\im \gamma$ and take $v_1 = (\cos\theta, \sin\theta)$ and $v_2 = (\cos \theta + \frac{\pi}{2}, \sin \theta + \frac{\pi}{2})$, then we have $p_1(x) \in \partial p_1(B)$ and $p_2(x) \in \partial p_2(B)$. Therefore \autoref{defn-special-corner} generalises the notion of a special corner in \cite{fung}.
\end{remark}

We note that if the boundary $S$ of $B$ is a smoothly embedded $n$-sphere then $B$ has no special corners with respect to any basis.
Indeed, any such point would have an $(n + 1)$-dimensional tangent space, which is never the case for a smooth embedding of an $n$-manifold such as $S^n$.

From now, we fix an orthonormal basis $\set{v_1, \dots, v_{n + 1}}$ of $\R^{n + 1}$ such that $B$ has no special corners with respect to this basis, and is regular with respect to this basis.

\begin{proof}[Proof (of \autoref{intersections-of-spheres})]
    Let $I$ be some subset of $\set{1, \dots, n + 1}$ of size $k \leq n$.
    Idempotence of the projection maps gives us $p_I(x) = p_I(x) \circ p_i(x)$ for all $i \in I$.
    The set $p_I(B)$ is homeomorphic to $\D^{n + 1 - k}$ by repeated use of \autoref{projection-is-strictly-convex}, and therefore the boundary of $p_I(B)$ is homeomorphic to $S^{n - k}$.
    Suppose $p_I(x)$ is in the boundary of $p_I(B)$ but $x$ is in the interior of $p_i(B)$ for some $i \in I$.
    Note that $p_I \circ p_i(B) = p_I(B)$, but $p_i(B)$ contains an open ball containing $p_i(x)$, which projects to an open ball in $p_I(B)$ containing $p_I(x)$.
    Hence $x$ is not contained in the boundary of $p_I(B)$.
    Since $p_I$ is injective on the boundary of $p_I(B)$ by repeated use of \autoref{preimage-of-boundary-is-sphere}, a copy of $S^{n - k}$ is embedded in $\bigcap_{i \in I} D_i^0$.
    Regularity with respect to the basis implies that every point in this intersection is contained in the embedded copy of $S^{n - k}$, so the embedding is a homeomorphism, proving the first bullet point.

    The function $u \mapsto u \cdot v_j$ attains a maximum and a minimum on $B$, and the points $x^{\pm} \in S$ at which these values are attained are distinct as $B$ does not lie in a hyperplane (otherwise $B$ would have empty interior).
    Suppose some point of the form $x^+ + \lambda v_i$ is contained in $B$ for $i \neq j, \lambda \neq 0$.
    Then by strict-convexity, the point $y := x^+ + \frac{\lambda}{2} v_i$ is contained in the interior of $B$, so $B$ contains an open ball of radius $\epsilon$ around $y$.
    This ball contains the point $y + \frac{\epsilon}{2} v_j$, and $(y + \frac{\epsilon}{2} v_j) \cdot v_j > x^+ \cdot v_j$, which is a contradiction.
    Hence $p_i(x^+)$ is in the boundary of $p_i(B)$ for all $i \neq j$, so $x^+ \in \bigcap_{i \neq j} D_i^0$, and $x^+ \in D_j^+$ as it is the maximum of the function $f$ as defined in \autoref{interior-pair-of-points}.
    The proof is similar for $x^-$.

    The third bullet point is an immediate consequence of the fact that $S$ has no special corners with respect to the basis.
\end{proof}

Any $B$ satisfying the outcomes of \autoref{intersections-of-spheres} is said to satisfy the \textit{crosspolytope condition}.
For each pair $a, b$ of opposite vertices of an $(n + 1)$-crosspolytope, the set of $(n - 1)$-faces of the crosspolytope disjoint from $a$ and $b$ forms the boundary of an $n$-crosspolytope.
We refer to this as a \textit{sub-crosspolytope}, and it is homeomorphic to $S^{n - 1}$.
\begin{figure}[h]
    \centering
    \includegraphics[width=0.8\textwidth]{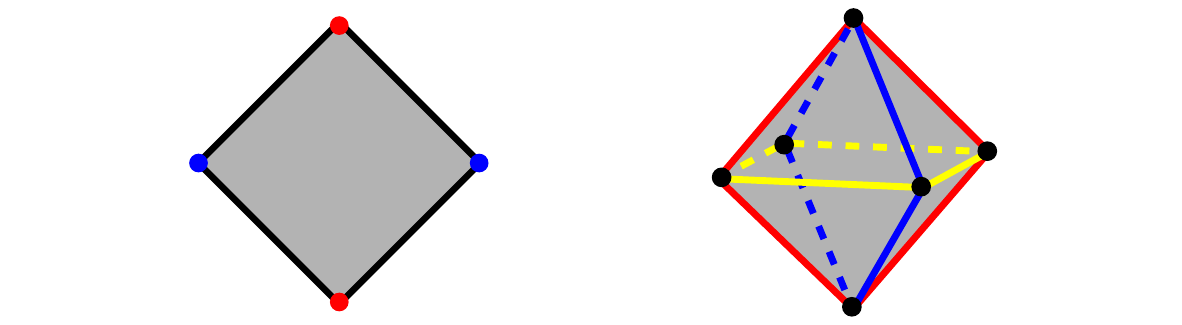}
    \caption{The sub-crosspolytopes of the square and the octahedron are pictured in a single colour.}
\end{figure}

An $(n + 1)$-crosspolytope has $(n + 1)$ pairs $a_1, b_1, \dots, a_{n + 1}, b_{n + 1}$ of opposite vertices, and therefore has $(n + 1)$ sub-crosspolytopes $A_i$.
The intersection of $k$ sub-crosspolytopes is the boundary of an $(n + 1 - k)$-crosspolytope, homeomorphic to $S^{n - k}$.
As a result of the previous proposition, we can find a homeomorphism $h_1$ from the $(n + 1)$-crosspolytope to $B$ such that the sub-crosspolytope $A_i$ is sent to the sphere $D_i^0$.
Each sub-crosspolytope $A_i$ separates the boundary of the crosspolytope into an upper region $B_i$ and a lower region $C_i$, and we have $h_1(B_i) = U_i^+$ and $h_1(C_i) = U_i^-$.
To augment this to a homeomorphism $h$ from the $(n + 1)$-cube to $B$ such that for each pair of opposite faces $B_i', C_i'$ of the $(n + 1)$-cube we have $h(B_i') \subset U_i^+$ and $h(C_i') \subset U_i^-$, we use the following lemma.

\begin{lemma}\label{cube-crosspolytope-balls}
    There exists a homeomorphism $h_2$ which sends the $n$-cube to the $n$-crosspolytope, and for each pair of opposite $(n - 1)$-faces $B_i', C_i'$ of the $n$-cube, we have $h_2(B_i') \subset B_i$ and $h_2(C_i') \subset C_i$.
\end{lemma}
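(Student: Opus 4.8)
The plan is to realise both bodies as concentric norm balls and write down an explicit radial homeomorphism, after which the facet condition becomes a coordinatewise sign check. Concretely, I would present the $n$-cube as $[-1,1]^n$, the closed unit ball of $\norm{\cdot}_\infty$, with facets $B_i' = \set{x : x_i = 1}$ and $C_i' = \set{x : x_i = -1}$; and the $n$-crosspolytope as the closed unit ball of $\norm{\cdot}_1$. Under this choice the sub-crosspolytope $A_i$ is the slice $\set{x : \norm{x}_1 = 1,\ x_i = 0}$, and the upper and lower regions it cuts out are exactly $B_i = \set{x : \norm{x}_1 = 1,\ x_i > 0}$ and $C_i = \set{x : \norm{x}_1 = 1,\ x_i < 0}$.

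I would then define $h_2(0) = 0$ and, for $x \neq 0$, send $x$ to the point lying the same fraction of the way from the origin to the boundary along the same ray, namely
\[ h_2(x) = \frac{\norm{x}_\infty}{\norm{x}_1}\, x. \]
The three routine verifications are: that $h_2$ maps the cube into the crosspolytope, since $\norm{h_2(x)}_1 = \norm{x}_\infty \leq 1$; that $y \mapsto \frac{\norm{y}_1}{\norm{y}_\infty}\, y$ is a two-sided inverse, which maps the crosspolytope back into the cube by the symmetric computation; and that both maps are continuous away from the origin and extend continuously across it, because $\norm{x}_\infty \leq \norm{x}_1$ forces $\norm{h_2(x)}_2 \leq \norm{x}_2$. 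Together these exhibit $h_2$ as a homeomorphism of the solid bodies carrying $\partial(\text{cube})$ onto $\partial(\text{crosspolytope})$.

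The facet condition then reads straight off the formula: for $x \in B_i'$ we have $x_i = 1$ and $\norm{x}_\infty = 1$, so the $i$-th coordinate of $h_2(x)$ equals $1/\norm{x}_1 > 0$ and hence $h_2(x) \in B_i$; reversing the sign handles $C_i'$ and $C_i$. I do not expect a genuine obstacle in this argument — essentially all of the content lies in the initial choice of presentation, which turns the radial map into a clean closed form. The only points needing mild care are continuity of $h_2$ at the origin and the identification of $B_i, C_i$ with the sign-determined pieces $\set{x : x_i > 0}$ and $\set{x : x_i < 0}$ of the crosspolytope boundary, since it is precisely this identification that makes the coordinatewise sign match up with the facets $B_i', C_i'$ of the cube.
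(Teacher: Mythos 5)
Your proposal is correct and follows essentially the same route as the paper: both present the cube and crosspolytope as the unit balls of $\norm{\cdot}_\infty$ and $\norm{\cdot}_1$ respectively, use the radial map $h_2(x) = \frac{\norm{x}_\infty}{\norm{x}_1}x$ (with $h_2(0)=0$), and read off the facet condition from the sign of the $i$th coordinate. Your treatment is in fact slightly more careful than the paper's, which asserts $h_2$ is a homeomorphism without writing down the inverse or checking continuity at the origin.
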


\begin{proof}
    The $n$-crosspolytope is the closed unit ball in the normed space $(\R^n, \norm{\cdot}_1)$, where
    \[ \norm{(x_1, \dots, x_n)}_1 := \sum_{i = 1}^n \abs{x_i}, \]
    and the $n$-cube is the closed unit ball in the normed space $(\R^n, \norm{\cdot}_{\infty})$, where
    \[ \norm{(x_1, \dots, x_n)}_{\infty} := \max(\abs{x_1}, \dots, \abs{x_n}). \]
    We can define a homeomorphism $h_2$ from the crosspolytope to the cube by taking $h_2(x) = \frac{\norm{x}_{\infty}}{\norm{x}_{1}} x$ for $x \neq 0$ and $h_2(0) = 0$.
    Note that $\norm{x}_{\infty} = 1$ implies $\norm{h_2(x)}_1 = 1$ i.e.\ points on the boundary of the cube are sent to points on the boundary of the crosspolytope.
    Opposite $(n - 1)$-faces $B_i', C_i'$ of the cube are the sets
    \[ B_i' = \set{x \in \R^n : x_i = 1}, \qquad C_i' = \set{x \in \R^n : x_i = -1}. \]
    The upper and lower regions $B_i, C_i$ of the boundary of the crosspolytope are the sets
    \[ B_i = \set{x \in \R^n : x_i > 0, \norm{x}_1 = 1}, \qquad C_i = \set{x \in \R^n : x_i < 0, \norm{x}_1 = 1}. \]
    If $x \in B_i'$ then $x_i = 1$ so the $i$th component of $h_2(x)$ is positive, hence $h_2(x_i) \in B_i$.
    The same argument holds in relation to $C_i'$ and $C_i$.
\end{proof}

The desired homeomorphism from the $(n + 1)$-cube to $B$ is then $h := h_1 \circ h_2$.

\begin{figure}[h]
    \centering
    \includegraphics[width=0.85\textwidth]{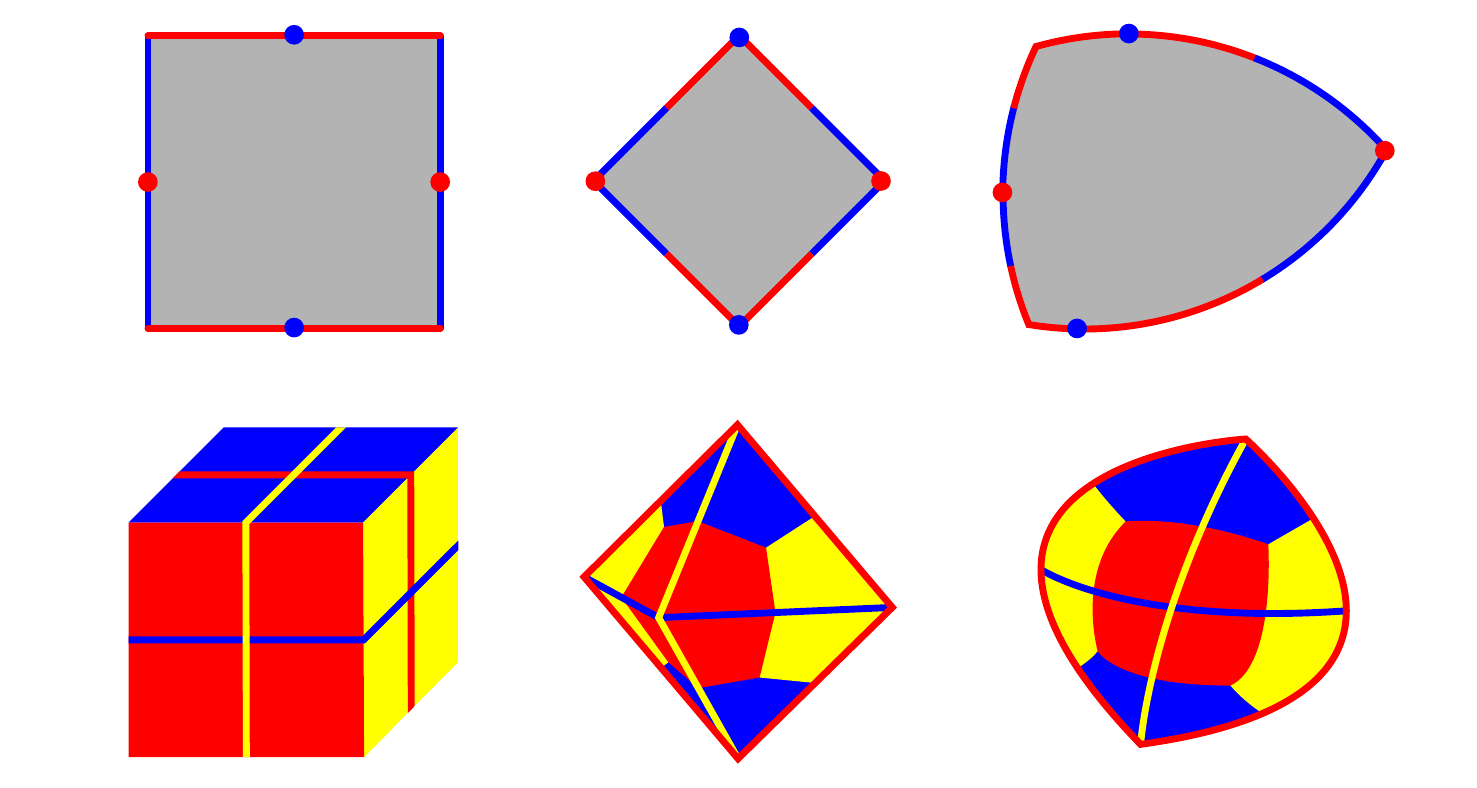}
    \caption{Each sphere $D_i^0$ separates the region of the same colour which corresponds to a pair of opposite $n$-faces of the $(n + 1)$-cube.}
    \label{separates-cube}
\end{figure}

The statement of the Poincar{\'e}-Miranda theorem as extracted from \cite{poincare-miranda} is thus.

\begin{theorem}
    Let $P = [-R_1, R_1] \times \cdots \times [-R_n, R_n]$.
    If $g : P \to \R^n$ is continuous and if, for each $i = 1, \dots, n$,
    \[ g_i(x) \leq 0 \text{ on } \set{x \in P : x_i = -R_i} \text{and } g_i(x) \geq 0 \text{ on } \set{x \in P : x_i = R_i}, \]
    then $g$ has at least one zero in $P$.
\end{theorem}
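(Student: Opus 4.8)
The plan is to deduce this from Brouwer's fixed point theorem, to which the Poincar\'e--Miranda theorem is \emph{equivalent in strength}; the entire content of the statement is to convert the sign conditions on the faces of $P$ into a continuous self-map of $P$ whose fixed points are forced to be zeros of $g$. I will assume throughout that each $R_i > 0$, so that $P$ is a compact convex box with non-empty interior, hence homeomorphic to $\D^n$ and subject to Brouwer.

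First I would introduce the \emph{clamped displacement map} $F : P \to P$ defined componentwise by
\[ F_i(x) = \max\bigl(-R_i,\, \min(x_i - g_i(x),\, R_i)\bigr). \]
Each $F_i$ is a composition of continuous maps (the coordinate $x \mapsto x_i$, the function $g_i$, and the clampings $\min$ and $\max$ against the constants $\pm R_i$), so $F$ is continuous, and $F_i(x) \in [-R_i, R_i]$ by construction, so $F$ carries $P$ into itself. Brouwer's fixed point theorem then supplies a point $x^\ast \in P$ with $F(x^\ast) = x^\ast$.

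Next I would examine this fixed point one coordinate at a time. Fix $i$ and write $a = x_i^\ast - g_i(x^\ast)$, so that $x_i^\ast = \max(-R_i, \min(a, R_i))$. If $a \in [-R_i, R_i]$ the clamp is inactive, giving $x_i^\ast = a$ and hence $g_i(x^\ast) = 0$. If $a > R_i$ then $x_i^\ast = R_i$, and $R_i - g_i(x^\ast) = a > R_i$ forces $g_i(x^\ast) < 0$; but $x_i^\ast = R_i$ places $x^\ast$ on the face $\set{x \in P : x_i = R_i}$, where the hypothesis gives $g_i(x^\ast) \geq 0$, a contradiction. The case $a < -R_i$ is symmetric, clashing with the sign of $g_i$ on the opposite face. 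Thus only the unclamped case can occur for each $i$, whence $g_i(x^\ast) = 0$ for all $i$ and $g(x^\ast) = 0$, with $x^\ast \in P$ as required.

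The only genuinely deep ingredient is Brouwer's theorem itself; everything else is a short coordinatewise case check, and this is unavoidable given the equivalence of the two results. If one wished to stay nearer the topological tools used above, I would instead argue by degree: after disposing of the trivial case of a zero on $\partial P$, the straight-line homotopy $H(x,t) = (1-t)g(x) + t x$ is nowhere zero on $\partial P \times [0,1]$, since at $t = 0$ it equals $g$, which is non-zero on $\partial P$ by assumption, while for $t > 0$ its $i$th component on the face $x_i = R_i$ is $(1-t)g_i + tR_i > 0$ (and symmetrically negative on $x_i = -R_i$). Homotopy invariance then gives $\deg(g, \Int P, 0) = \deg(\mathrm{id}, \Int P, 0) = 1$, using $0 \in \Int P$, forcing a zero of $g$ in the interior. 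A third, self-contained route runs through Sperner's lemma applied to a triangulation of $P$.
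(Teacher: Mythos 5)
Your proof is correct, but the comparison here is asymmetric: the paper offers no proof of this statement at all. This is the Poincar\'e--Miranda theorem, quoted verbatim from the cited reference, and the paper deliberately treats it as a black box, remarking only that it is also implied by the Borsuk--Ulam theorem as used in the earlier work on hyperrhombs. Your argument, by contrast, is a genuine self-contained derivation from Brouwer's fixed point theorem, and it is carried out correctly: the clamped map $F_i(x) = \max\bigl(-R_i, \min(x_i - g_i(x), R_i)\bigr)$ is continuous and maps $P$ into itself, and your coordinatewise case analysis at a fixed point $x^\ast$ is sound --- if the clamp is active at $+R_i$ then $x^\ast$ lies on the face $x_i = R_i$ while $g_i(x^\ast) < 0$, contradicting the sign hypothesis on that face, symmetrically at $-R_i$, so every coordinate is unclamped and $g(x^\ast) = 0$. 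The degree-theoretic alternative you sketch is also valid, since the homotopy $(1-t)g(x) + tx$ is nonvanishing on $\partial P$ for $t > 0$ by the face conditions and for $t = 0$ by the assumption that the trivial case has been disposed of. As for what each treatment buys: the paper's citation keeps attention on its actual contribution --- the construction of the medians $\MM_i$, the crosspolytope condition, and the cube homeomorphism $h$, which is the scaffolding that makes the Poincar\'e--Miranda theorem applicable --- whereas your proof would make the paper self-contained modulo Brouwer, at the cost of a page of standard material. Your standing assumption that each $R_i > 0$ is harmless and implicit in the paper's statement.
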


We remark that the Poincar{\'e}-Miranda theorem is implied by the Borsuk-Ulam theorem as used to prove an analogous result in \cite{hyperrhombs}.

The intuition behind this theorem is that for each component function $g_i : P \to \R$ of $g$, the preimage $g_i^{-1}(0)$ forms a barrier separating two opposite faces of the cube, and these barriers are forced to intersect in a point.
This is analogous to our scenario with medians $\MM_i \cong \D^{n}$ embedded in $B \cong \D^{n + 1}$.
In what follows we work backwards, constructing functions where the corresponding barriers are the medians $\MM_i$, and using the Poincar{\'e}-Miranda theorem to prove that the medians all intersect in at least one point.
In fact, the same reasoning works for any set of $(n + 1)$ compact subsets $K_i$ of $B$ such that there exist pairs of disjoint open subsets $V_i^+, V_i^-$ of $B$ with boundaries contained in $K_i$ such that $U_i^{\pm} \subset V_i^{\pm}$, provided $B$ satisfies the crosspolytope condition (taking $D_i^0 = K_i \cap S$).

The idea behind the following lemma is that if we have an open subset of a metric space, we can replace a continuous function on this open set, as long as we keep the function the same on the boundary of the subset.

\begin{lemma}
    Let $X, Y$ be metric spaces and $f, g : X \to Y$ continuous functions.
    Let $V$ be an open subset of $X$ such that $f$ and $g$ agree on the boundary of $V$.
    Then the function
    \begin{gather*}
        h : X \to Y,\\
        h(x) = \begin{cases}
            f(x)    & \text{if } x \in V,\\
            g(x)    & \text{if } x \notin V,
        \end{cases}
    \end{gather*}
    is continuous.
\end{lemma}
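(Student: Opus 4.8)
The plan is to recognise this as a gluing (pasting) lemma in disguise. The apparent difficulty is that, while $V$ is open, its complement $X \setminus V$ is closed, so the two pieces on which $h$ is prescribed by the two formulas are neither both open nor both closed, and the textbook pasting lemmas apply to covers by two sets that are both closed or both open. To get around this I would replace the open piece $V$ by its closure $\overline{V}$. Since $V$ is open, its boundary satisfies $\partial V = \overline{V} \setminus V \subseteq X \setminus V$, so $h$ takes the value $g$ on all of $\partial V$; by hypothesis this agrees with $f$ there. Consequently $h$ restricted to $\overline{V}$ coincides with $f|_{\overline{V}}$ (the two agree on $V$ by the definition of $h$, and on $\partial V$ because $f = g$ there), while $h$ restricted to $X \setminus V$ is exactly $g|_{X \setminus V}$.

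Next I would observe that $\overline{V}$ and $X \setminus V$ are both closed, that their union is all of $X$ (as $\overline{V} \supseteq V$), and that their intersection is precisely $\partial V$, on which $f|_{\overline{V}}$ and $g|_{X \setminus V}$ agree. Thus $h$ is obtained by pasting the continuous functions $f|_{\overline{V}}$ and $g|_{X \setminus V}$ along the two closed sets $\overline{V}$ and $X \setminus V$. The classical pasting lemma for two closed sets then yields continuity at once: for any closed $C \subseteq Y$ we have $h^{-1}(C) = (f|_{\overline{V}})^{-1}(C) \cup (g|_{X \setminus V})^{-1}(C)$, a union of two sets that are closed in $\overline{V}$ and in $X \setminus V$ respectively and hence closed in $X$, so $h^{-1}(C)$ is closed. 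This argument in fact uses only the topological hypotheses, not the metric structure.

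As a self-contained alternative exploiting that $X$ is a metric space (hence first-countable), one can argue by sequences without even naming the closure. Continuity is immediate at interior points of $V$ (where $h = f$ locally) and at points of $X \setminus \overline{V}$ (where $h = g$ locally), so only points $x_0 \in \partial V$ need attention; there $h(x_0) = g(x_0) = f(x_0)$. Given $x_n \to x_0$, split the sequence according to membership in $V$: along the part lying in $V$ one has $h(x_n) = f(x_n) \to f(x_0)$, along the part lying outside $V$ one has $h(x_n) = g(x_n) \to g(x_0)$, and since these two limits coincide, $h(x_n) \to h(x_0)$.

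The only point requiring genuine care — and hence the nearest thing to an obstacle — is the bookkeeping at the boundary: verifying that, because $V$ is open, one genuinely has $\partial V \subseteq X \setminus V$, so that the value $h$ assigns on $\partial V$ is $g$, and that the hypothesis $f = g$ on $\partial V$ is exactly what reconciles this with the prescription $h = f$ on $V$. Once this is settled the statement is a direct application of a standard lemma.
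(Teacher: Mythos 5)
Your proposal is correct, and it is more than the paper offers: the paper dismisses this lemma with the single line ``The proof is a simple $\epsilon$--$\delta$ argument'' and never writes it out. Your first argument is genuinely different in character from what the paper gestures at. By replacing $V$ with $\overline{V}$, noting that $\partial V \subseteq X \setminus V$ forces $h|_{\overline{V}} = f|_{\overline{V}}$ and $h|_{X \setminus V} = g|_{X \setminus V}$, and then invoking the pasting lemma for the closed cover $\set{\overline{V},\, X \setminus V}$, you get a clean structural proof that, as you observe, uses no metric structure at all --- the lemma holds for arbitrary topological spaces, which is a small but real strengthening over the statement as given. Your second, sequential argument is essentially the metric-space proof the paper had in mind (sequential continuity being equivalent to $\epsilon$--$\delta$ continuity in metric spaces), with the right case split: continuity is automatic on the open sets $V$ and $X \setminus \overline{V}$, and at a boundary point the two subsequential limits $f(x_0)$ and $g(x_0)$ coincide by hypothesis. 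Both routes are sound; the bookkeeping you flag as the delicate point (that $V$ open gives $\partial V \cap V = \emptyset$, so $h = g$ on $\partial V$ and the hypothesis $f = g$ there is exactly what makes the two restrictions match up) is handled correctly. The only stylistic quibble is that in the sequential argument one should note that one of the two parts of the split sequence may be finite, in which case it is simply discarded; this is implicit in your phrasing and does not affect correctness.
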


The proof is a simple $\epsilon-\delta$ argument.

Given a compact subset $K$ of a metric space $X$, the \textit{set distance function} $d : X \to \R_{\geq 0}$ which gives the shortest distance from $x \in X$ to any point in $K$ is continuous.
We let $d_i$ be the set distance function of the compact subset $K_i$ of the metric space $B$.
The boundary of $V_i^+$ is contained in $K_i$, and $g_i(x) = 0$ for all $x \in K_i$, so $d_i$ and $-d_i$ agree on the boundary of $V_i^+$.
It then follows from the previous lemma that the functions
\begin{gather*}
    g_i : B \to \R,\\
    g_i(x) = \begin{cases}
        d_i(x)  & \text{if } x \in V_i^+,\\
        -d_i(x)  & \text{if } x \notin V_i^+,\\
    \end{cases}
\end{gather*}
are continuous.
Therefore the function $g : B \to \R^{n + 1}$ with component functions $g_i$ is also continuous.
We note that our use of such functions is similar to that in Proposition 1 of \cite{hyperrhombs}.

\begin{proof}[Proof (of \autoref{intersections-of-manifolds})]
    Since $B$ satisfies the crosspolytope condition we can use \autoref{cube-crosspolytope-balls} to construct the homeomorphism $h$ from the $(n + 1)$-cube to $B$.
    Each component function $g_i$ is positive on $U_i^+$ as $U_i^+ \subset V_i^+$, and negative on $U_i^-$ as $U_i^- \subset V_i^-$.
    Given a pair $B_i', C_i'$ of opposite $n$-faces of the $(n + 1)$-cube, we have $h(B_i') \subset U_i^+$ and $h(C_i') \subset U_i^-$, so $g_i \circ h$ has opposite sign on $B_i'$ and $C_i'$.
    Invoking the Poincar{\'e}-Miranda theorem, we find that $g \circ h(x) = 0$ for some $x \in B$, and this corresponds to an element of $\bigcap_{i} K_i$.
    Any $x \in \bigcap_{i} K_i$ is contained in the interior of $B$ as the intersection of the spheres $D_i^0$ is empty.
\end{proof}

\begin{proof}[Proof (of {\hyperref[main-theorem]{Theorem}})]
    The result follows directly from \autoref{intersections-of-manifolds}, taking $K_i = \MM_i$.
    Indeed, any $x \in \bigcap_{i} \MM_i$ contained in the interior of $B$ is such that $p_i(x)$ is in the interior of $p_i(B)$, hence there exist exactly two points $x_i^{\pm}$ such that $x = \frac{x_i^+ + x_i^-}{2}$.
    The points $x_i^{\pm}$ are the vertices of an $(n + 1)$-rhomb with direction $\set{v_1, \dots, v_{n + 1}}$ inscribed into $S$. 
\end{proof}

\section{Discussion}

This technique could potentially be extended to a proof that the image of every strictly-convex embedding of $S^n$ into $\R^{n + 1}$ inscribes uncountably many $(n + 1)$-rhombs, as in Fung's paper.
In particular, Fung shows that for every Jordan curve there exists an open interval of directions such that the curve inscribes a rhombus in each direction.
This corresponds to an open subset of $S^1 \times S^0$ (if we consider $v$ and $-v$ to be distinct directions).
In the $n$-dimensional case, the orthonormal bases of $\R^{n + 1}$ which determine a special corner correspond to elements of $S^n \times S^{n - 1} \times \cdots \times S^0$, so we pose the following question.
\begin{question*}
    Let $B$ be the image of a strictly-convex embedding of $\D^{n + 1}$ into $\R^{n + 1}$ with boundary $S$.
    Does there exist an open subset $U$ of $S^n \times S^{n - 1} \times \cdots \times S^0$ such that $S$ inscribes an $(n + 1)$-rhomb in every direction contained in $U$?
\end{question*}
The construction in this paper could also be generalised to (non-strictly) convex embeddings.
One particular issue in doing so is that the subsets $D_i^0$ are no longer necessarily homeomorphic to spheres; for the unit square and directions $v_1 = (1, 0)$ and $v_2 = (0, 1)$ the set $D_1^0$ is the union of the top and bottom edges and $D_2^0$ is the union of the left and right edges.

If $B$ is an embedding of $\D^{n + 1}$ into $\R^{n + 1}$, some questions which arise when considering special corners are
\begin{itemize}
    \item Does some $B$ with a countably infinite number of special corners exist? Some $B$ with uncountably many?
    \item Does some $B$ with only finitely many points which are not special corners exist? Countably many?
    \item Given a collection $\mathcal{B}$ of bases of $\R^{n + 1}$, can we construct some $B$ whose special corners are elements $\mathcal{B}$ and only elements of $\mathcal{B}$?
\end{itemize}

\medskip

\bibliographystyle{siam}
\bibliography{rhombs_spheres}

\end{document}